\newcommand{\1}{\mathbf{1}}
\newcommand{\Z}{\mathbb{Z}}
\newcommand{\N}{\mathbb{N}}
\newcommand{\R}{\mathbb{R}}
\newcommand{\C}{\mathbb{C}}
\newcommand{\euler}{\mathrm e}  
\renewcommand{\L}{\mathsf{L}^2}
\renewcommand{\H}{\mathsf{H}}
\newcommand{\HS}{\mathcal{H}}
\renewcommand{\a}{\mathfrak{a}}
\newcommand{\A}{{\mathcal{A}}}
\newcommand{\J}{{\mathcal{J}}} 
\newcommand{\Res}{{\mathcal{R}}}
\DeclareMathOperator{\dom}{dom}
\newcommand{\la}{\langle}
\newcommand{\ra}{\rangle}
\newcommand{\eps}{\varepsilon}
\newcommand{\e}{_{\varepsilon}}
\newcommand{\al}{\alpha}
\newcommand{\be}{\beta}
\newcommand{\ga}{\gamma}
\newcommand{\x}{\mathbf{x}}
\renewcommand{\d}{\,\mathrm{d}}
\newcommand{\ds}{\displaystyle}
\DeclareMathOperator{\dist}{\mathrm{dist}}
\newcommand{\Id}{\mathrm{I}}
\newcommand{\wt}{\widetilde}
\DeclareMathOperator{\intr}   {int}  
\newcommand{\Hausdorff}{\mathrm H} 
\DeclarePairedDelimiter\abs{\lvert}{\rvert}%
\DeclarePairedDelimiter\norm{\lVert}{\rVert}%
\theoremstyle{plain}
\newtheorem{theorem}{Theorem}[section]
\newtheorem*{theorem*}{Theorem}
\newtheorem{lemma}[theorem]{Lemma}
\newtheorem*{lemma*}{Lemma}
\newtheorem{proposition}[theorem]{Proposition}
\theoremstyle{definition}
\newtheorem{remark}[theorem]{Remark}
\newtheorem*{remark*}{Remark} 
\newtheorem{definition}[theorem]{Definition}
\numberwithin{equation}{section}
\begin{document}
\title
[A geometric approximation of $\delta$-interactions by Neumann Laplacians]
{A geometric approximation of $\delta$-interactions by Neumann Laplacians}

\author[Andrii Khrabustovskyi]{Andrii Khrabustovskyi\,$^{1,2}$}
\address{$^1$ Department of Physics, Faculty of Science, University of
  Hradec Kr\'{a}lov\'{e}, Czech Republic} 
\address{$^2$ Department of Theoretical Physics,
Nuclear Physics Institute of the Czech Academy of Sciences, \v{R}e\v{z}, Czech Republic} 
\email{andrii.khrabustovskyi@uhk.cz}

\author[Olaf Post]{Olaf Post\,$^{3}$}
\address{$^3$ Department of Mathematics, University of Trier,
Germany}
\email{olaf.post@uni-trier.de}

\begin{abstract}
  We demonstrate how to approximate one-dimensional Schr\"odinger operators with $\delta$-interaction by a Neumann Laplacian on a narrow waveguide-like domain. Namely, we consider a domain consisting of a straight  strip and a small protuberance with ``room-and-passage'' geometry. We show that in the limit when the perpendicular size of the strip tends to zero, and the room and the passage are appropriately scaled, the Neumann Laplacian on this domain converges in (a kind of) norm resolvent sense to the above singular Schr\"odinger operator. 
Also we prove Hausdorff convergence of the spectra.
In both cases estimates on the rate of convergence are derived. 
\end{abstract}

\keywords{$\delta$-interaction, singularly perturbed domains, Neumann Laplacian, norm resolvent convergence, operator estimates, spectral convergence}

\maketitle

\thispagestyle{empty}

\section{Introduction}
\label{sec1}

Schr\"odinger operators with  potentials supported on a discrete set of points 
have attracted considerable attention over several past decades due to numerous applications in different fields of  science  and  engineering. 
In particular, such operators serve as \emph{solvable models} in quantum mechanics. The term ``solvable''  reflects the fact that their mathematical and physical
quantities, like spectrum, eigenfunctions and resonances, can be calculated in many cases explicitly.
We refer to the monograph~\cite{AGHH05} for a comprehensive introduction to this topic. 
Note that in the literature such models are also called Schr\"odinger operators with \emph{point interactions}.
 
Investigation of these operators was originated by the famous \emph{Kronig-Penney model}~\cite{KP31},
concerning a non-relativistic electron moving in a fixed crystal lattice. 
Its mathematical representation is a one-dimensional Schr\"odinger operator with a singular potential supported at $\Z=\{0,\pm 1,\pm 2,\dots\}$:  
\begin{gather}\label{KP}
  -\frac{\d^2}{\d x^2} + \ga\sum_{z\in\Z}\delta(\cdot - z),
\end{gather}
where $\delta(\cdot - z)$ denotes the Dirac delta-function supported at $z$, and where $\gamma\in\R$ is the strength of the $\delta$-interaction.  
The formal expression~\eqref{KP} can be realized as a self-adjoint operator    in $\L(\R)$: the action 
of this operator is given by  $-(f\restriction_{\R\setminus \Z})''$, its domain consists of $f\in \mathsf{H}^2(\R\setminus \Z)$ satisfying the following   conditions at   $z\in\Z$:
\begin{gather}
  \label{delta-conditions}
  f(z-0) = f(z + 0),\quad f'(z+0)-f'( z-0)=\ga f( z\pm 0).
\end{gather}
One says that the conditions~\eqref{delta-conditions} correspond to a $\delta$-interaction with strength $\ga$ supported at the point $z$.

In the present paper
we wish to contribute to the understanding of ways 
how Schr\"odinger operators with $\delta$-interactions can be approximated by differential operators with regular coefficients. 
In what follows, we restrict ourselves to a Schr\"odinger operator on an open interval $\Omega_0$ (bounded or not) with a single $\delta$-interaction with strength $\gamma$ supported at $0$; we denote this operator by $\A_0$.
The obtained results can easily be extended to Schr\"odinger operators with finitely and even countably many $\delta$-interactions, see Section~\ref{sec5}.  

One way of approximation is given by a sequence of Schr\"odinger operators with smooth $\delta$-like potentials, see e.g.~\cite[Sec.~1.3.2]{AGHH05}.
In the present work we discuss another approach, in which the desired $\delta$-interaction
is generated by geometry. 
Namely, we  construct a waveguide-like domain $\Omega\e$ such that its Neumann Laplacian $\A\e$ converges (in a kind of norm-resolvent topology) to the desired operator $\A_0$ as the perpendicular size of the guide tends to zero.  Since the approximating operator is non-negative (minus the Laplacian), we can only expect $\gamma \ge 0$ in the limit (in our convention, $\gamma \ge 0$ leads to a non-negative operator with $\delta$-interaction, see~\eqref{ae0} for the corresponding form).

\begin{figure}[h]
  \begin{center}
    \begin{picture}(290,90)
      \includegraphics[width=0.6\textwidth]{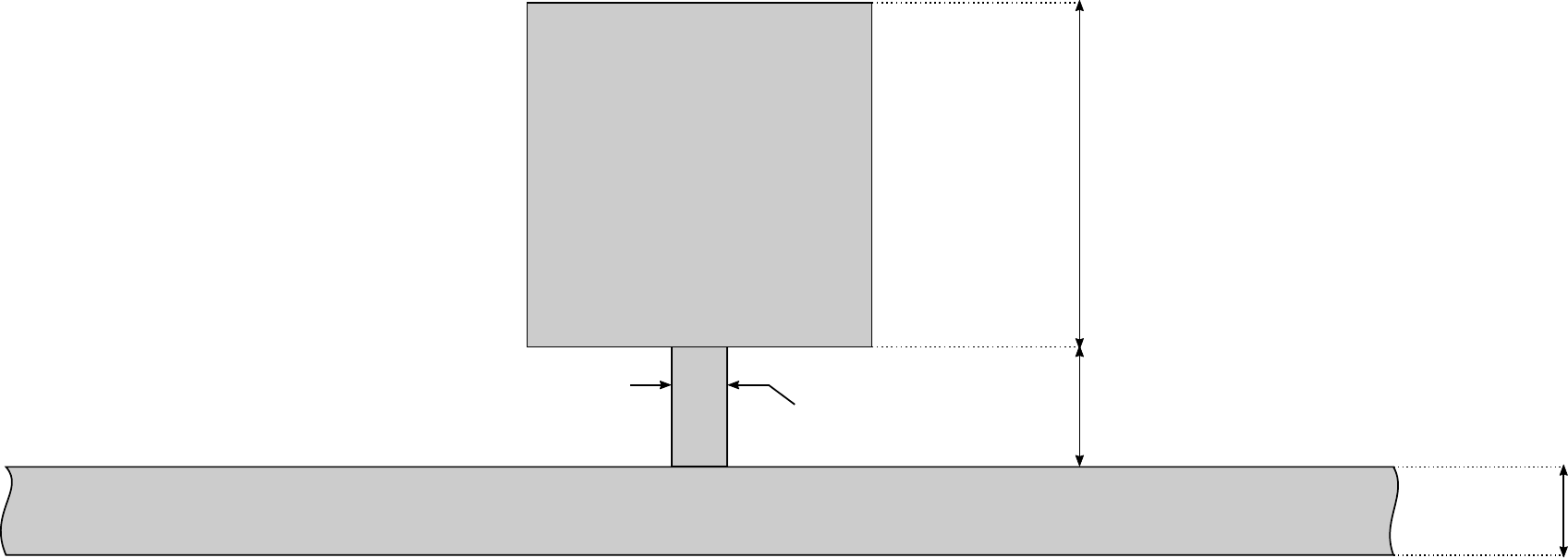}
      \put(-120,5){$S\e$}
      \put(-160,60){$R\e$}
      \put(-152,20){$P\e$}
      \put(1,5){$\eps$}
      \put(-130,23){$d\e$}
      \put(-81,23){$h\e$}
      \put(-81,63){$b\e$}
    \end{picture}
  \end{center}
  \caption{The waveguide $\Omega\e$}
  \label{fig1}
\end{figure}

The approximating domain will be a thickened version of $\Omega_0$
 with a decoration near $0 \in \Omega_0$ given by a small passage
 $P_\eps$ and a larger (but shrinking) room $R_\eps$ (see
 Figure~\ref{fig1}).  In particular, the room  (a square with
 side length $b_\eps=\eps^\beta$) can be chosen to shrink arbitrarily
 slow ($\beta \in (0,1/2)$), and the passage of height
 $h_\eps=\eps^\alpha$ and width $d_\eps=\gamma\eps^{\alpha+1}$ can
 shrink arbitrarily fast ($\alpha>0$).
 Nevertheless, the area $\eps^{2\beta}$ of the room $R_\eps$ is still
 shrinking compared to the transversal shrinking rate $\eps$ of the
 strip $S_\eps$.  The strength of the $\delta$-interaction is given by
 $\gamma=d_\eps/(h_\eps \eps)$.  Note that we can interpret the quotient of
 the width $d_\eps$ and the height $h_\eps$ of the passage as the
 (vertical) \emph{conductance} of $P_\eps$.

It is interesting to compare our results   with the 
the case, when the room  is joined
directly to the strip near $0$ as on Figure~\ref{fig2}. 
Such a geometrical configuration was considered in \cite{KuZ03,EP05}  (see also~\cite{P12} for a version with
 generalised norm resolvent convergence).
The critical value $1/2$ (in dimension $2$) of the parameter $\beta$ appears here as well.
Namely, if $\beta\in (0,1/2)$, then the limit operator is the
 direct sum of the Laplacian with Dirichlet boundary condition at $0$
 (hence decouples) and the $0$ operator on a one-dimensional space as
 in our case. 
 In both cases (Figure~\ref{fig1} and Figure~\ref{fig2}) the room area decays slower than the transversal shrinking rate, which attracts particles and leads to an own state (at energy $0$).  
 If the room is directly attached to the strip (Figure~\ref{fig2}), then it prevents transport along the strip, while in the present situation (Figure~\ref{fig1}) it leads to a repulsive (i.e.\ with $\gamma\ge 0$) interaction at $0$. 
Note that for the waveguide as on Figure~\ref{fig2} in the critical case $b\e=\eps^{1/2}$ 
the limiting operator is   
the Laplacian with another peculiar conditions at $0$ (see the so-called ``borderline case'' in~\cite{EP05} or\cite{P12}); these conditions resemble~\eqref{delta-conditions}
with the coupling constant $\gamma$ being replaced by a quantity dependent on the spectral parameter.

\begin{figure}[h]
  \begin{center}
      \includegraphics[width=0.5\textwidth]{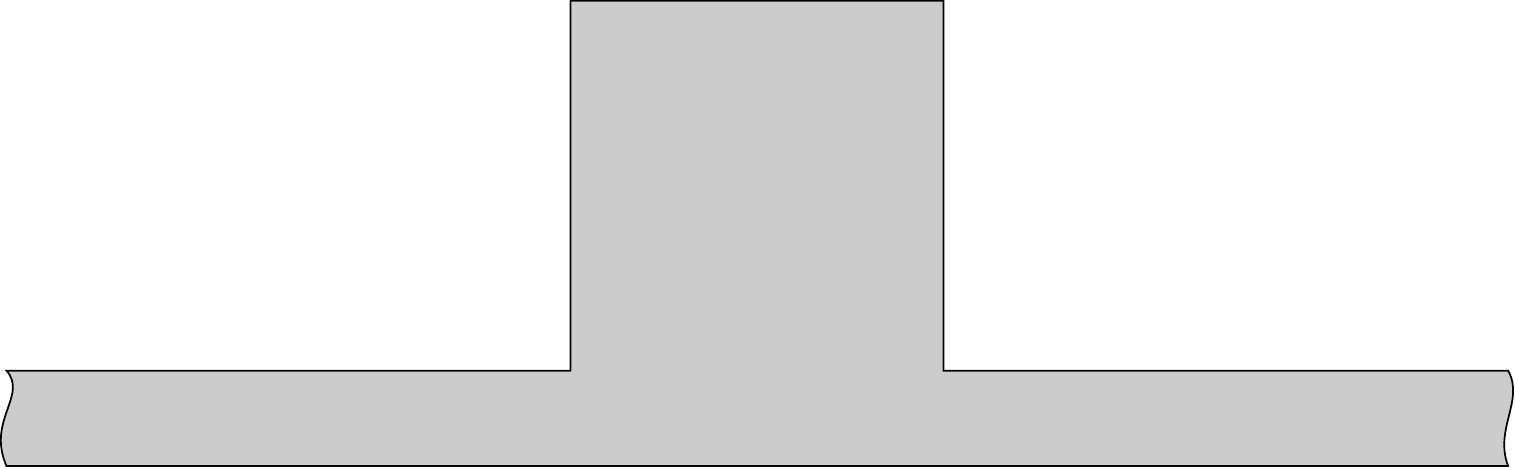}  
      \caption{  \label{fig2}}
  \end{center}    
\end{figure}

Domains with attached protuberances with ``room-and-passage'' geometry are widely used in spectral theory  in order to demonstrate various peculiar effects. For example, R.~Courant and D.~Hilbert~\cite{CH53} used such a domain as an example of a small  perturbation breaking the continuity of eigenvalues of the Neumann Laplacian; see~\cite{AHH91} for more details.
In~\cite{HSS91}  such ``rooms-and-passages'' were used to construct 
a domain such that its 
Neumann Laplacian has prescribed essential spectrum (see also the overview \cite{BK19} for more details). 
Homogenization problems in domains with corrugated ``room-and-passage''-like boundary were studied in~\cite{CK17,CK15}.
Finally, various peculiar examples in the theory of Sobolev spaces are based on domains with such a geometry,
see~\cite{Am78,Fr79,EH87} and the monograph~\cite{Ma11}.

As the spaces change while passing to the limit $\eps \to 0$, we use the framework of \emph{generalised norm resolvent convergence} developed by the second author in~\cite{P06} and~\cite{P12}.  We provide a self-contained presentation including a new proof of spectral convergence (cf.\ Theorem~\ref{thA:2}) in Section~\ref{sec3}.  As usual the generalised \emph{norm} resolvent convergence is not much harder to show than other concepts such as versions of \emph{strong} resolvent convergence used e.g.\ in homogenization theory~\cite[Chap.~III]{OSY92},~\cite[Chap.~XI]{ZKO94}.

Actually, our limit operator is \emph{not} the Laplacian with $\delta$-interaction itself (as already mentioned above), but its direct sum with the null operator on a one-dimensional space. 
In Remark~\ref{rem:deco} we give some light on the appearance of this extra component.

The work is organized as follows. In Section~\ref{sec2} we set the problem and formulate the main results, Theorem~\ref{th1} concerning the norm resolvent convergence and Theorem~\ref{th2} concerning the spectral convergence.
Note that we treat even more general operators 
$-\frac{\d^2}{\d x^2}+V_0+\delta(\cdot)$, where $V_0$ is a regular potential.
In Section~\ref{sec3} we give two abstract results designed for studying convergence of operators in varying Hilbert spaces. Using them we prove the main results in Section~\ref{sec4}. 
Finally, in Section~\ref{sec5} we discuss the case of countably many point interactions.

\section{Setting of the problem and the main result}
\label{sec2}

\subsection{The waveguide $\Omega\e$ and the operator $\A\e$}

Throughout the paper we denote points in $\R^2$ by $\x=(x_1,x_2)$.
Let $\Omega_0=(\ell_-,\ell_+)\subset\R$ be an interval satisfying  
\begin{gather}\label{ab}
  -\infty\le \ell_-<0< \ell_+\le\infty.
\end{gather}
Let $\eps\in (0,\eps_0]$  be a small parameter. We set
\begin{gather}\label{dhb}
  d\e=\gamma\eps^{\al+1},\quad h\e=\eps^\al,\quad 
  b\e=\eps^\be
  \text{\quad with\quad }\al>0,\ 0<\be<\frac12,\ \gamma>0.
\end{gather}
Moreover, we claim $\eps_0$ to be sufficiently small, namely
\begin{equation}
\label{eq:def.eps0}
\eps_0 < \min\Bigl\{(2\gamma)^{-1/\al},|\ell_-|,\ell_+\Bigl\},\quad
\eps_0<\gamma^{-1/(\al+1-\be)},\quad\text{and}\quad
\eps_0<\min\Bigl\{\ga,\ga^{-1}\Bigl\}.
\end{equation}
The first assumption in~\eqref{eq:def.eps0} implies
\begin{gather}
  \label{eps:small:1}
  d\e
  \leq \frac\eps2
  \qquad\text{and}\qquad
  [-\eps,\eps] \subset\Omega_0,
\end{gather}
the second one leads to
\begin{gather}\label{eps:small:2}
d\e\leq b\e,
\end{gather}
and the last one yields $|\ln\gamma|\leq |\ln\eps|$ (it will be used in the proof of Lemma~\ref{lm:aux}).
Note that, since either $\gamma\le 1$ or $\gamma^{-1}\le 1$, one has $\eps_0<1$. 
Finally, we introduce the  domains
\begin{equation*}
  \begin{array}{llll}
    S\e&=&\left\{\x\in\R^2:\ x_1\in \Omega_0,\ x_2\in (-\eps,0)\right\}&\text{(straight strip)},\\[2mm]
    P\e&=&\left\{\x\in\R^2:\  |x_1|<{\dfrac{d\e} 2},\ x_2\in (0,h\e)\right\}&\text{(passage)},\\[2mm]
    R\e&=&\left\{\x\in\R^2:\ |x_1|< \dfrac{b\e}2,\ x_2\in (h\e,h\e+b\e)\right\}&\text{(room)},
\end{array}
\end{equation*}
and the resulting domain $\Omega\e$  given by  
\begin{equation*}
  \Omega\e=\intr(\overline{S\e\cup P\e \cup R\e})
\end{equation*}
(here $\intr(\cdot)$ stands for the interior of a subset of $\R^2$).
Due to~\eqref{eps:small:1}--\eqref{eps:small:2} the geometry of $\Omega\e$ is exactly as 
shown in Figure~\ref{fig1}, i.e.\ the bottom part (respectively, the top part) of $\partial P\e$
is contained in the top part of  $\partial S\e$ (respectively, the bottom part of  $\partial R\e$).

In the Hilbert space $\HS\e\coloneqq\L(\Omega\e)$ we introduce the sesquilinear form 
\begin{gather}
\label{ae}
\a\e[u,v]=
\int_{\Omega\e}
\left(\nabla u(\x)\cdot \overline{\nabla v(\x)}  + V\e(\x)u(\x)\overline{v(\x)}\right)\d\x,\quad
\dom(\a\e)=\H^1(\Omega\e)
\end{gather}
with a real-valued potential $V\e\in\mathsf{L}^\infty(\Omega\e)$.
This form is densely defined in $\L(\Omega\e)$, non-negative and  closed, consequently
\cite[Theorem~VI.2.1]{Ka66} there is a unique non-negative 
self-adjoint operator $\A\e$ acting in $\L(\Omega\e)$ such that the domain inclusion 
$\dom(\A\e)\subset\dom(\a\e)$ and the equality
\begin{equation*}
(\A\e u, v)_{\L(\Omega\e)}=\a\e [u,v],\quad \forall u\in \dom(\A\e),\,\,v\in\dom(\a\e)
\end{equation*}
hold. Obviously, $\A\e=-\Delta_{\Omega\e}+V\e$, where $\Delta_{\Omega\e}$ is the Neumann Laplacian on $\Omega\e$.\smallskip

The main goal of this work is to describe the behavior of the resolvent and the spectrum of $\A\e$ as $\eps\to 0$. In the next subsection we introduce the anticipated limiting operator.

\subsection{The operator $\A_0$}

Recall that $\Omega_0\subset\R$ is an open interval containing $0$, see~\eqref{ab}.
We denote \begin{equation*}\HS_0\coloneqq \L(\Omega_0)\oplus \C,\end{equation*} i.e.\ $\HS_0$ is a Hilbert space consisting of   $f=(f_1,f_2)\in \L(\Omega_0)\times \C$
equipped with the scalar product
\begin{equation*}
(f,g)_{\HS_0}=\int_{\Omega_0} f_1 \overline{g_1}\d x + f_2\overline{g_2},\quad
f=(f_1,f_2),\ g=(g_1,g_2).
\end{equation*}
In the space $\HS_0$ we introduce the sesquilinear form $\a_0$   defined by
\begin{gather}
\label{ae0}
\begin{array}{c}
\a_0[f,g]=\ds\int_{\Omega_0}\left( f_1'(x)\overline{g_1'(x)}+V_0(x)f_1(x)\overline{g_1(x)} \right)\d x+
\gamma\, f_1(0)\overline{g_1(0)},\\[2mm]
\dom(\a_0)=\H^1(\Omega_0)\times\C,
\end{array}
\end{gather}
where $V_0\in\mathsf{L}^\infty(\Omega_0)$. It is easy to see that the above form is densely defined in $\HS_0$, non-negative and  closed. We denote by $\A_0$ the self-adjoint operator associated with $\a_0$.
It is easy to show that its domain is given by
\begin{gather*}
\dom(\A_0)=
\left\{
f=(f_1,f_2)\in \mathsf{H}^2(\Omega_0\setminus\{0\})\times\C:\;
\begin{array}{l}
f_1(-0)=f_1(+0),\\[1mm] f_1'(+0)-f_1'(-0)=\gamma f_1(\pm 0),\\[1mm]
f_1'(\ell_-)=0\text{ provided }\ell_->-\infty,\\[1mm]
f_1'(\ell_+)=0\text{ provided }\ell_+<\infty
\end{array}
\right\},
\\[1mm] 
(\A_0 f)_1(x)=
-f_1''(x)+V_0(x)f_1(x), \quad (x \ne 0), \qquad
\qquad (\A_0 f)_2=0.
\end{gather*}
Evidently, 
\begin{equation*}
\A_0=\widehat \A_0\oplus 0_{\C}\quad\text{in}\quad\L(\Omega_0)\oplus \C,
\end{equation*}
where $0_\C$ is the null-operator in $\C$, and $\widehat\A_0$ is defined by
the operation 
\begin{equation*}
  -\dfrac{\d^2}{\d x^2}+V_0
  \quad\text{on}\quad (\ell_-,0)\cup(0,\ell_+)
\end{equation*}
with Neumann conditions at $\ell_-$ (provided $\ell_->-\infty$) and 
$\ell_+$ (provided $\ell_+<\infty$) and $\delta$-coupling with strength $\gamma$ at $x=0$.
Consequently,
\begin{equation*}\sigma(\A_0)=\sigma(\widehat\A_0)\cup\{0\}.\end{equation*}

\subsection{Resolvent convergence}

Our first goal is to prove (a kind of) norm resolvent convergence of the operator $\A\e$ to the operator $\A_0$. Since these operators act in different Hilbert spaces $\HS\e=\L(\Omega\e)$ and $\HS_0= \L(\Omega_0)\oplus \C$, respectively, the standard definition of norm resolvent convergence cannot be applied here and should be modified in an appropriate way. The modified definition should be
adjusted in such a way that it still implies  the convergence of spectra as it takes place in the classical situation. The standard approach (see, e.g.\ the abstract scheme in~\cite{IOS89} and its applications to homogenization in perforated spaces~\cite[Chap.~III]{OSY92},~\cite[Chap.~XI]{ZKO94}) is to treat the operator $\mathcal{L}\e \colon \HS_0\to\HS\e$,
\begin{gather*}
\mathcal{L}\e\coloneqq \Res\e \J\e - \J\e \Res_0,
\end{gather*}
where $\Res\e$ and $\Res_0$ are the resolvents of $\A\e$ and $\A_0$, respectively,   and 
$\J\e\colon\HS_0\to \HS\e$ is a suitable bounded linear operator being ``almost isometric'' in a sense 
that
\begin{gather*}
\forall f\in\HS_0\colon\ \lim_{\eps\to 0}\|\J\e f\|_{\HS\e} =\|f\|_{\HS_0}.
\end{gather*}   
 
For the problem we deal in this paper it is natural to define the operator $\J\e$ as follows:
\begin{gather}\label{J}
  (\J\e f)(\x)=
  \begin{cases}
    \eps^{-1/2}f_1(x_1),& \x=(x_1,x_2)\in S\e,\\
    0,&\x\in P\e,\\
    (b\e)^{-1}f_2,&\x\in R\e,
  \end{cases}\quad
  f=(f_1,f_2)\in \HS_0=\L(\Omega_0)\oplus \C.
\end{gather}
The operator $\J\e$ is natural because it is an isometry, namely one has
\begin{gather}\label{CS} 
\forall f\in \HS_0\colon\quad\|\J\e f \|_{\HS\e}=\|f\|_{\HS_0}.
\end{gather} 
Along with $\J\e$ we also introduce the operator $\wt\J\e\colon\HS\e\to\HS_0$ by
\begin{gather}
  \nonumber
  \L(\Omega\e)\ni u\,\longmapsto\wt\J\e u=((\wt\J\e u)_1,(\wt\J\e u)_2)\in \L(\Omega_0)\times\C,\text{ where }\\
\label{J'}
(\wt\J\e u)_1(x_1)=
  \eps^{-1/2}\int_{-\eps}^0 u(x_1,x_2)\d x_2\quad (x_1\in\Omega_0),\qquad
(\wt\J\e u)_2= (b\e)^{-1}\int_{R\e} u(\x)\d\x.
\end{gather}
It is easy to see that $\|\wt\J u\|_{\HS_0}\leq \|u\|_{\HS\e}$.  Moreover, $\wt\J\e$ is the adjoint of $\J\e$, as
\begin{gather}
  \label{J*}
  \forall f\in\HS_0,\
  \forall u\in\HS\e\colon\quad
  (\J\e f,u)_{\HS\e}=(f,\wt\J\e u)_{\HS_0}.
\end{gather}

To guarantee the closeness of the resolvents of the operators $\A\e$ and $\A_0$, the potentials $V\e$ and $V_0$ have to be close in a suitable sense. Namely, we choose the family $\{V\e\}_{\eps>0}$ as follows
\begin{gather}\label{V}
  V\e(\x)=
  \begin{cases}
    V_0(x_1),& \x=(x_1,x_2)\in S\e,\\
    0,&\x\in P\e\cup R\e.
  \end{cases}
\end{gather}
In order to simplify the presentation we assume further that
\begin{equation*}
  V_0(x)\geq 0,
\end{equation*}
and hence both $\A\e$ and $\A_0$ are non-negative operators; the general case needs only slight modifications.  We denote by $\Res\e$ and $\Res_0$ the resolvents of $\A\e$ and $\A_0$, respectively:
\begin{equation*}
  \Res\e\coloneqq (\A\e+\Id)^{-1},\quad \Res_0\coloneqq (\A_0+\Id)^{-1}.
\end{equation*}

We are now in position to formulate the first result  of this work. Below, $\|\cdot\|_{X\to Y}$ stands for the  norm of an operator acting between normed spaces $X$ and $Y$.

\begin{theorem}
  \label{th1}
  One has
  \begin{align}
    \label{th1:1}
    \|\Res\e\J\e- \J\e \Res_0   \|_{\HS_0 \to \HS\e }
    = \|\wt\J\e\Res\e -  \Res_0 \wt\J\e  \|_{\HS\e\to \HS_0}
    &\leq  C_1 \eps^{\min\left\{\al, 1/2-\be\right\}},
  \end{align}
  where $C_1>0$ is a constant independent of $\eps$ (see Remark~\ref{rem:constants} for more details).
\end{theorem}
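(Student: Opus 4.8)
The two operators in~\eqref{th1:1} are mutually adjoint: $\A\e$ and $\A_0$ are self-adjoint and $\wt\J\e=\J\e^{*}$ by~\eqref{J*}, hence $(\Res\e\J\e-\J\e\Res_0)^{*}=\wt\J\e\Res\e-\Res_0\wt\J\e$, so the two norms coincide and it suffices to estimate $\|\Res\e\J\e-\J\e\Res_0\|_{\HS_0\to\HS\e}$. Fix $f=(f_1,f_2)\in\HS_0$ and put $g=(g_1,g_2)\coloneqq\Res_0 f\in\dom(\A_0)$; thus $\a_0[g,\phi]+(g,\phi)_{\HS_0}=(f,\phi)_{\HS_0}$ for all $\phi\in\dom(\a_0)$ and $g_2=f_2$. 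From $g_1''=(V_0+1)g_1-f_1\in\L(\Omega_0)$ and the one-dimensional Sobolev embedding (on $\Omega_0$ for $g_1$, and on the fixed interval $[-\eps_0,\eps_0]\subset\Omega_0$ for $g_1'$) we get a constant $C$ independent of $\eps$ with
\[
  \|g_1\|_{\H^1(\Omega_0)}+\|g_1\|_{\H^2(\Omega_0\setminus\{0\})}+\|g_1\|_{\mathsf{L}^\infty(\Omega_0)}+\|g_1'\|_{\mathsf{L}^\infty((-\eps_0,\eps_0)\setminus\{0\})}+|g_1(0)|+|g_2|\le C\|f\|_{\HS_0}.
\]
Next we replace $\J\e g$ by a genuinely $\H^1$-admissible function $\Phi\e\in\H^1(\Omega\e)$, obtained by interpolating linearly (in $x_2$) through the passage:
\[
  \Phi\e=\eps^{-1/2}g_1(x_1)\ \text{on}\ S\e,\quad
  \Phi\e=\frac{h\e-x_2}{h\e}\,\eps^{-1/2}g_1(x_1)+\frac{x_2}{h\e}\,(b\e)^{-1}g_2\ \text{on}\ P\e,\quad
  \Phi\e=(b\e)^{-1}g_2\ \text{on}\ R\e.
\]
Thanks to $d\e\le b\e$ and $[-d\e,d\e]\subset[-\eps_0,\eps_0]$ the traces of the three pieces agree on both internal interfaces, so $\Phi\e\in\H^1(\Omega\e)$; moreover $\Phi\e-\J\e g$ is supported in $P\e$, and since $|P\e|=\ga\eps^{2\al+1}$ while $|\Phi\e|\le\eps^{-1/2}\|g_1\|_{\mathsf{L}^\infty(\Omega_0)}+\eps^{-\be}|g_2|$ there, $\|\Phi\e-\J\e g\|_{\HS\e}\le C\eps^{\al}\|f\|_{\HS_0}$. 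Since $\J\e\Res_0 f=\J\e g$, it remains to bound $\|\Res\e\J\e f-\Phi\e\|_{\HS\e}$.

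Equip $\H^1(\Omega\e)$ with the inner product $\la u,v\ra_\eps\coloneqq\a\e[u,v]+(u,v)_{\HS\e}$ and the associated norm $\|\cdot\|_\eps$ (legitimate since $\a\e\ge0$), and set $w\coloneqq\Res\e\J\e f-\Phi\e$. Using the form characterisation of $\Res\e$ together with $(\J\e f,\psi)_{\HS\e}=(f,\wt\J\e\psi)_{\HS_0}$ from~\eqref{J*} and $g=\Res_0 f$ (note $\wt\J\e\psi\in\dom(\a_0)$), we obtain for every $\psi\in\H^1(\Omega\e)$
\[
  \la w,\psi\ra_\eps=\a_0[g,\wt\J\e\psi]+(g,\wt\J\e\psi)_{\HS_0}-\a\e[\Phi\e,\psi]-(\Phi\e,\psi)_{\HS\e}=:\eta(\psi).
\]
Inserting the definitions of $\J\e,\wt\J\e,\Phi\e,V\e$, the $S\e$-contributions cancel (in the gradient, potential and $\mathsf L^2$ terms alike) and the $R\e$-contributions cancel as well, leaving only the $\delta$-term of $\a_0$ and the passage integrals:
\[
  \eta(\psi)=\ga\,\eps^{-1/2}g_1(0)\int_{-\eps}^{0}\overline{\psi(0,x_2)}\,\d x_2-\int_{P\e}\nabla\Phi\e\cdot\overline{\nabla\psi}\,\d\x-\int_{P\e}\Phi\e\,\overline{\psi}\,\d\x.
\]

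We now bound $\eta(\psi)$, using $\|\nabla\psi\|_{\L(\Omega\e)}\le\|\psi\|_\eps$ and $\|\psi\|_{\L(\Omega\e)}\le\|\psi\|_\eps$. The last integral is $\le\|\Phi\e\|_{\L(P\e)}\|\psi\|_{\L(P\e)}\le C\eps^{\al}\|f\|_{\HS_0}\|\psi\|_\eps$. In the middle one, $\partial_1\Phi\e=\tfrac{h\e-x_2}{h\e}\eps^{-1/2}g_1'$ has $\L(P\e)$-norm $\le C\eps^{\al}\|f\|_{\HS_0}$, while $\partial_2\Phi\e$ differs from the constant $c\e/h\e$ (with $c\e\coloneqq(b\e)^{-1}g_2-\eps^{-1/2}g_1(0)$) by a function of $\L(P\e)$-norm $\le C\eps^{\al+1}\|f\|_{\HS_0}$; as $\int_{P\e}\overline{\partial_2\psi}\,\d\x=\int_{-d\e/2}^{d\e/2}(\overline{\psi(x_1,h\e)}-\overline{\psi(x_1,0)})\,\d x_1$ and $d\e/h\e=\ga\eps$, one arrives at
\[
  \eta(\psi)=\ga\,\eps^{1/2}g_1(0)(a_{\mathrm v}-a_b)-\ga\,\eps^{1/2}g_1(0)\,a_t+\ga\,\eps^{1-\be}g_2(a_b-a_t)+O\!\big(\eps^{\al}\|f\|_{\HS_0}\|\psi\|_\eps\big),
\]
where $a_{\mathrm v},a_b,a_t$ are the averages of $\psi$ over the segments $\{0\}\times(-\eps,0)$, $(-d\e/2,d\e/2)\times\{0\}$ and $(-d\e/2,d\e/2)\times\{h\e\}$. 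The required closeness of these averages is the geometric heart of the proof and is exactly what Lemma~\ref{lm:aux} supplies: a two-dimensional trace/Poincar\'e estimate on a square of side $\sim\eps$ inside $S\e$ with $0$ on its top edge gives $|a_{\mathrm v}-a_b|\le C|\ln\eps|^{1/2}\|\nabla\psi\|_{\L(\Omega\e)}$ (the logarithm stemming from the short boundary segment of relative length $d\e/\eps=\ga\eps^{\al}$); integrating $\partial_2\psi$ across $P\e$ gives $|a_b-a_t|\le C(h\e/d\e)^{1/2}\|\nabla\psi\|_{\L(P\e)}=C\eps^{-1/2}\|\nabla\psi\|_{\L(P\e)}$; and the trace of $\psi$ from the room $R\e$ onto the short segment at its base gives $|a_t|\le C\big((b\e)^{-1}\|\psi\|_{\L(R\e)}+|\ln(b\e/d\e)|^{1/2}\|\nabla\psi\|_{\L(R\e)}\big)\le C(\eps^{-\be}+|\ln\eps|^{1/2})\|\psi\|_\eps$, where $|\ln(b\e/d\e)|\le C|\ln\eps|$ uses $|\ln\ga|\le|\ln\eps|$ from~\eqref{eq:def.eps0}.

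Substituting, and using $|g_1(0)|,|g_2|\le C\|f\|_{\HS_0}$ together with $0<\be<\tfrac12$ (so that $\eps^{1/2}|\ln\eps|^{1/2}\le C\eps^{1/2-\be}$ and $\eps^{1-\be}\le C\eps^{1/2-\be}$ for small $\eps$), we obtain $|\eta(\psi)|\le C\eps^{\min\{\al,\,1/2-\be\}}\|f\|_{\HS_0}\|\psi\|_\eps$ for every $\psi\in\H^1(\Omega\e)$. Choosing $\psi=w$ gives $\|w\|_\eps^2=\eta(w)\le C\eps^{\min\{\al,1/2-\be\}}\|f\|_{\HS_0}\|w\|_\eps$, hence $\|\Res\e\J\e f-\Phi\e\|_{\HS\e}\le\|w\|_\eps\le C\eps^{\min\{\al,1/2-\be\}}\|f\|_{\HS_0}$; combined with $\|\Phi\e-\J\e g\|_{\HS\e}\le C\eps^{\al}\|f\|_{\HS_0}$ this yields~\eqref{th1:1}. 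I expect the main obstacle to be precisely the bundle of geometric trace and Poincar\'e inequalities invoked above: one must keep close track of how all constants depend on the four scales $\eps,d\e,h\e,b\e$ — this is where the hypotheses $\al>0$, $0<\be<1/2$ and $|\ln\ga|\le|\ln\eps|$ are consumed — and verify that every logarithmic loss is compensated by a strictly positive power of $\eps$.
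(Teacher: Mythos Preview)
Your argument is correct and recovers the stated rate; the only blemish is a harmless sign in the displayed decomposition of $\eta(\psi)$ (the coefficient of the stand-alone $a_t$ term should be $+\gamma\eps^{1/2}g_1(0)$ rather than $-$), which does not affect anything since you bound $|a_t|$ anyway.

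The paper's proof is organised differently. Rather than fixing $f$, setting $g=\Res_0 f$, and testing the variational identity for $\Res\e$ directly, the paper passes through an abstract resolvent--comparison result (Theorem~\ref{thA:1}) which requires, in addition to $\J\e$ and $\wt\J\e$, two ``lifted'' identification operators $\J\e^1\colon\HS_0^1\to\HS\e^1$ and $\wt\J\e^1\colon\HS\e^1\to\HS_0^1$ and the verification of four abstract hypotheses, the decisive one being the form estimate $|\a\e[\J\e^1 f,u]-\a_0[f,\wt\J\e^1 u]|\le\delta\|f\|_{\HS_0^2}\|u\|_{\HS\e^1}$ (Lemma~\ref{lm2}). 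The paper's $\J\e^1$ (see~\eqref{J1}) first \emph{flattens} $g_1$ near $0$ via a piecewise-linear reparametrisation $\Phi\e$, so that the value fed into the passage is the constant $g_1(0)$; your interpolant skips this step and keeps the variable value $g_1(x_1)$ on the passage, compensating with the $L^\infty$ bound on $g_1'$ available from $g\in\dom(\A_0)$. Both constructions rest on the same geometric trace inequalities (Lemma~\ref{lm:aux}), and your three averages $a_{\mathrm v},a_b,a_t$ are precisely the quantities $\la u\ra_{D\e^0},\la u\ra_{D\e^-},\la u\ra_{D\e^+}$ appearing in the paper's estimate of $I\e^2$. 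What the abstract route buys is reusability (Theorem~\ref{thA:1} also feeds the spectral convergence in Theorem~\ref{thA:2} and the extension in Section~\ref{sec5}); what your direct variational route buys is a shorter, self-contained argument for this particular statement.
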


Obviously, solely the estimate~\eqref{th1:1} provide no information on the closeness of spectra --- simply because~\eqref{th1:1} holds for arbitrary operators $\Res\e\colon\HS\e\to\HS\e$ and $\Res_0\colon \HS_0\to \HS_0$ if we choose $\J\e=0$ and $\wt\J\e=0$.  Therefore, in order to get some information on the closeness of spectra,  we need additional conditions on the operators $\J\e$ and $\wt\J\e$. Such conditions are formulated in the abstract Theorem~\ref{thA:2} below, and also in the original concept of quasi-unitary equivalence in~\cite{P06} and~\cite{P12}, see Section~\ref{sec:quasi-uni}.

\subsection{Spectral convergence}

Our second result concerns Hausdorff convergence of spectra.  Recall (see, e.g.~\cite{Vo02}), 
that for closed sets $X,Y\subset\R$ the \emph{Hausdorff distance} between $X$ and $Y$ is given by
\begin{gather}
  \label{dH}
  d_\Hausdorff (X,Y)
  \coloneqq \max\bigl\{\sup_{x\in X} 
    \inf_{y\in Y}|x-y|;\,\sup_{y\in Y} \inf_{x\in X}|y-x|\bigr\}.
\end{gather}
The notion of convergence provided by this metric   is too restrictive for our purposes.  Indeed, the closeness  of $\sigma(\A\e)$ and $\sigma(\A_0)$  in  the metric $d_\Hausdorff(\cdot,\cdot)$ would mean  that  these  spectra  look nearly the same uniformly on all parts of $[0,\infty)$ --- a situation, which is not guaranteed by norm  resolvent  convergence.
To overcome this difficulty, we introduced the new metric $\wt d_\Hausdorff(\cdot,\cdot)$, which is given by
\begin{gather*}
\wt d_\Hausdorff(X,Y)\coloneqq  d_\Hausdorff
\left(\overline{(1+X)^{-1}},\overline{(1+Y)^{-1}}\right),\ X,Y\subset[0,\infty),
\end{gather*}
where  $(1+X)^{-1}=\{(1+x)^{-1}:\ x\in X\}$ and $(1+Y)^{-1}=\{(1+y)^{-1}:\ y\in Y\}$.
With respect to this metric  two  spectra  can  be  close  even  if  they  differ  significantly  at  high energies. 
Note that $\wt d_\Hausdorff\left(X\e,X\right)\to 0\text{ as }\eps\to 0$ iff
\begin{itemize} 
\item
for each $x \in \R\setminus X$ there exists $d>0$ such that $X\e\cap\{y:\ |y-x|<d\}=\emptyset$ \emph{eventually}  (as $\eps \to 0$), and 

\item  
for any $x\in X$ there exists a family  $\{x\e\}\e$ with $x_\eps \in X_\eps$ such that $\lim_{\eps\to 0}x\e=x$. 

\end{itemize} 
Note that by the spectral mapping theorem
\begin{gather}
\label{dd}
\wt d_\Hausdorff\left(\sigma(\A\e),\sigma(\A_0)\right)= 
d_\Hausdorff\left(\sigma(\Res\e),\sigma(\Res_0)\right).
\end{gather}

\begin{theorem}\label{th2}
  One has
  \begin{gather}\label{th2:est}
    \wt d_\Hausdorff\left(\sigma(\A\e),\sigma(\A_0)\right)
    \leq C_2 \eps^{\min \{\al, 1/2-\be, 2\be\}},
  \end{gather}
  where
  $C_2>0$ is a constant independent of $\eps$, see~\eqref{eq:c2}.
\end{theorem}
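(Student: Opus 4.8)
The plan is to deduce Theorem~\ref{th2} from Theorem~\ref{th1} together with the abstract spectral-convergence machinery of Theorem~\ref{thA:2} (quasi-unitary equivalence). By~\eqref{dd} it suffices to estimate the ordinary Hausdorff distance $d_\Hausdorff(\sigma(\Res\e),\sigma(\Res_0))$ between the spectra of the two \emph{bounded, self-adjoint, non-negative} resolvent operators. For such operators, closeness of spectra in Hausdorff distance follows if one can show that for every spectral value $\mu\in\sigma(\Res_0)$ there is a nearby point of $\sigma(\Res\e)$ and vice versa; the standard route is to produce, for a given $\mu$, an approximate eigenvector of the other operator with small residual, and to invoke the elementary fact that $\dist(\mu,\sigma(T))\le \|(T-\mu)\psi\|/\|\psi\|$ for self-adjoint $T$.

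First I would record the two-sided transfer estimates. From~\eqref{th1:1} we already have $\|\Res\e\J\e-\J\e\Res_0\|\le C_1\eps^{\min\{\al,1/2-\be\}}$ and its adjoint version. To run the spectral argument in both directions one also needs that $\J\e$ and $\wt\J\e$ are \emph{quasi-unitary}: $\J\e$ is an exact isometry by~\eqref{CS}, so $\|\J\e^*\J\e-\Id_{\HS_0}\|=0$; the nontrivial ingredient is a bound on $\|\J\e\wt\J\e-\Id_{\HS\e}\|$ acting on the range of $\Res\e$ (equivalently on $\dom(\a\e)$), i.e.\ that $\wt\J\e$ recovers $u$ up to an error controlled by the energy form. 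This is where the geometry enters: on the strip $S\e$ the map $u\mapsto \J\e\wt\J\e u$ is the transversal averaging projection, whose defect is estimated by a Poincaré inequality in the thin direction (order $\eps$); on the room $R\e$ the defect is controlled by a Poincaré inequality on a square of side $b\e=\eps^\be$, giving a factor $\eps^\be$, while the quadratic form picks up the small-area weight — tracking the normalisation $(b\e)^{-1}$ against the area $b\e^2$ produces the exponent $2\be$; the passage $P\e$ contributes a term governed by the conductance $d\e/(h\e\eps)=\gamma$ and is where the $\delta$-coupling is born. Combining these, $\|(\J\e\wt\J\e-\Id)\Res\e\|\lesssim \eps^{\min\{1/2-\be,2\be\}}$ or similar, and likewise $\|(\wt\J\e\J\e-\Id)\Res_0\|$ is small.

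Next I would assemble the pieces. Given $\mu\in\sigma(\Res_0)$, pick a normalised approximate eigenvector $f$ with $\|(\Res_0-\mu)f\|$ small (using a spectral projection onto a tiny interval around $\mu$), set $u\coloneqq\J\e f\in\HS\e$, and compute
\[
\|(\Res\e-\mu)u\|
\le \|\Res\e\J\e f-\J\e\Res_0 f\|+\|\J\e(\Res_0-\mu)f\|
\le C_1\eps^{\min\{\al,1/2-\be\}}+\|(\Res_0-\mu)f\|,
\]
using $\|\J\e\|=1$; since $\|u\|=\|f\|=1$, this gives $\dist(\mu,\sigma(\Res\e))\lesssim \eps^{\min\{\al,1/2-\be\}}$ up to the arbitrarily small spectral-projection error, hence $\lesssim \eps^{\min\{\al,1/2-\be\}}$ in the limit. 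For the reverse inclusion, given $\lambda\in\sigma(\Res\e)$ with approximate eigenvector $u$, $\|u\|=1$, set $f\coloneqq\wt\J\e u$; then $f\ne 0$ because $\|\J\e\wt\J\e u-u\|$ is small (quasi-unitarity on the range of $\Res\e$, which contains the relevant $u$), and
\[
\|(\Res_0-\lambda)f\|\le \|\wt\J\e\Res\e u-\Res_0\wt\J\e u\|+\|\wt\J\e(\Res\e-\lambda)u\|
\lesssim \eps^{\min\{\al,1/2-\be\}}+\|(\Res\e-\lambda)u\|,
\]
so $\dist(\lambda,\sigma(\Res_0))\lesssim \eps^{\min\{\al,1/2-\be,2\be\}}$, the last exponent absorbing the room-defect term that enters when we normalise $f$. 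Taking the max over both directions yields~\eqref{th2:est} with $C_2$ built from $C_1$ and the Poincaré/geometry constants, and $d_\Hausdorff(\sigma(\Res\e),\sigma(\Res_0))\le C_2\eps^{\min\{\al,1/2-\be,2\be\}}$ translates via~\eqref{dd} into the claimed bound for $\wt d_\Hausdorff$.

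The main obstacle is not the abstract approximate-eigenvector argument, which is routine once the transfer estimates are in hand, but rather extracting the sharp exponent $2\be$: one must carefully balance the room normalisation $(b\e)^{-1}$ in~\eqref{J} against the room area $b\e^2$ and the Poincaré constant $\sim b\e$ on $R\e$, and verify that the passage term — the source of the $\delta$-interaction — does not degrade this rate, i.e.\ that the choice $d\e=\gamma\eps^{\al+1}$, $h\e=\eps^\al$ makes the passage contribution exactly $\gamma\,|f_1(0)|^2$ in the limit with error $\lesssim\eps^{\al}$. These are precisely the estimates underlying Theorem~\ref{th1}, so for Theorem~\ref{th2} the work reduces to checking that the quasi-unitarity defects of $\J\e,\wt\J\e$ on the respective resolvent ranges obey the same $\eps^{\min\{\al,1/2-\be,2\be\}}$ bound, after which~\eqref{th2:est} follows by the displayed two-sided estimates.
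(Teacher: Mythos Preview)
Your overall strategy --- combine Theorem~\ref{th1} with the abstract Theorem~\ref{thA:2} via approximate eigenvectors transferred through $\J\e$ and $\wt\J\e$ --- is exactly the paper's route, and the direction $\sigma(\Res_0)\to\sigma(\Res\e)$ goes through as you say because $\J\e$ is an isometry. The gap is in the reverse direction, in how you obtain the exponent $2\be$.

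Your claimed bound $\|(\J\e\wt\J\e-\Id)\Res\e\|\lesssim\eps^{\min\{1/2-\be,2\be\}}$ is wrong. The defect on the room is governed by the Poincar\'e inequality $\|u-\la u\ra_{R\e}\|_{\L(R\e)}\le (b\e/\pi)\|\nabla u\|_{\L(R\e)}$, which gives only $\eps^\be$ (unsquared); the passage contributes $\eps^\al$ and the strip $\eps$. Hence one has only $\|u-\J\e\wt\J\e u\|_{\HS\e}\le C\eps^{\min\{\al,\be\}}\|u\|_{\HS\e^1}$ (this is the paper's Lemma~\ref{lm4}); your heuristic about ``normalisation $(b\e)^{-1}$ against area $b\e^2$'' does not supply an extra factor. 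Feeding this $\eps^{\min\{\al,\be\}}$ into the general quasi-unitarity machinery (Proposition~\ref{prp:quasi-uni-spec-conv}) produces $\wt\nu\sim\eps^{\be}$ and a final bound $\eps^{\min\{\al,1/2-\be,\be\}}$, strictly weaker than~\eqref{th2:est}; the paper makes precisely this point in Remark~\ref{rem:why-not-quasi-uni}.

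What actually yields $2\be$ is the \emph{squared} inequality~\eqref{thA:2:4}. The paper establishes it directly (Lemma~\ref{lm3}): the Poincar\'e inequality in the form $\|u\|^2_{\L(R\e)}\le|(\wt\J\e u)_2|^2+(\eps^{2\be}/\pi^2)\|\nabla u\|^2_{\L(R\e)}$, together with the analogous estimates on $S\e$ and $P\e$, gives $\wt\nu=\nu_\eps\sim\eps^{2\min\{\al,\be\}}$, and then Theorem~\ref{thA:2} with $\kappa=\wt\kappa=1/2$ delivers~\eqref{th2:est}. Your route can be salvaged by noting that, since $\J\e$ is an isometry with $\wt\J\e=\J\e^*$, one has the exact identity
\[
\|u\|_{\HS\e}^2-\|\wt\J\e u\|_{\HS_0}^2=\|u-\J\e\wt\J\e u\|_{\HS\e}^2,
\]
so the bound of Lemma~\ref{lm4} \emph{squared} already yields~\eqref{thA:2:4} with $\wt\nu\sim\eps^{2\min\{\al,\be\}}$. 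But this step is missing from your outline; without it the appearance of $2\be$ is unjustified, and the phrase ``the last exponent absorbing the room-defect term that enters when we normalise $f$'' hides exactly the mechanism that needs to be made explicit.
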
  
Note that the best convergence rate in \eqref{th2:est} is provided when   $\alpha=1/3$ and $\beta=1/6$.

\begin{remark}\label{rem:deco}
We denote 
\begin{gather}
\label{Dpm}
D^+\e\coloneqq \left\{\x=(x_1,x_2)\in\partial P\e:\ x_2= h\e\right\},\quad
D^-\e\coloneqq \left\{\x=(x_1,x_2)\in\partial P\e:\ x_2=0\right\},
\end{gather}
and $F\e\coloneqq \intr (\overline{S\e\cup P\e})=S\e\cup P\e\cup D^-\e$. 
Let $\A_{F\e}$ be the operator in $\L(F\e)$ acting as $-\Delta + V\e$ and with Neumann boundary conditions on $\partial F\e\setminus D^+\e$ and Dirichlet conditions on $D^+\e$. 
Using similar methods as in the proof of Theorem~\ref{th2}, one can show that 
\begin{gather}
\wt d_\Hausdorff(\sigma(\A_{F\e}), \sigma(\widehat\A_0))\to 0
\quad\text{as}\quad
\eps\to 0;\label{rem:deco:1}
\end{gather}
the $\delta$-coupling at $0$
is caused by the Dirichlet conditions on $D^+\e$ (these boundary conditions can be regarded as an infinite potential). 
Also, let $\A_{B\e}$ be the Neumann Laplacian on $B\e$. 
The first eigenvalue of $\A_{B\e}$ is zero for each $\eps>0$, while the next eigenvalues escape to infinity as $\eps\to 0$. Hence, we get
\begin{gather}\label{rem:deco:2}
\wt d_\Hausdorff(\sigma(\A_{B\e}), \{0\})\to 0
\quad\text{as}\quad
\eps\to 0.
\end{gather}
Finally, in $\L(\Omega\e)=\L(F\e)\oplus \L(B\e)$ we consider the operator $\A\e'\coloneqq\A_{F\e}\oplus\A_{B\e}$.
It follows from \eqref{th2:est}, \eqref{rem:deco:1}-\eqref{rem:deco:2} that the spectra of $\A\e$ and $\A\e'$ are close in the $\wt d_\Hausdorff$-metric as $\eps\to 0$.
The fact that the asymptotic behavior of the spectrum does not change, if one  
detaches the passage from the room and changes the boundary conditions on the contact part of the passage boundary,
is not surprising --- see, e.g.\ the ``Organ pipe Lemma'' in \cite[Sec.~1]{HSS91}.
\end{remark}

\section{Abstract toolbox}
\label{sec3}

In this section we present two abstract results serving to compare the resolvents (Theorem~\ref{thA:1}) and the spectra (Theorem~\ref{thA:2}) of two self-adjoint non-negative operators acting in two different Hilbert spaces.  
The first result was established by the second author in~\cite{P06}, 
and the second result  (in a slightly weaker form) was proven by the first author and G.~Cardone in~\cite{CK19}. For convenience of the reader we will present complete proofs here.
 
Throughout this section $\HS$ and $\wt\HS$ are two  Hilbert spaces, 
$\a$ and $\wt\a$ are closed, densely defined, non-negative
sesquilinear forms in $\HS $ and $\wt\HS$, respectively. We denote by
$\A$ and $\wt\A$ the non-negative, self-adjoint operators associated
with $\a $ and $\wt\a$, by $\Res$ and $\wt\Res$ we denote the resolvents of 
$\A$ and $\wt\A$, respectively:
\begin{equation*}
  \Res\coloneqq (\A+\Id)^{-1},\quad \wt\Res\coloneqq (\wt\A+\Id)^{-1}.
\end{equation*}
Along with $\HS$ and $\wt\HS$  we also introduce spaces  $\HS^1 $ and $\wt\HS^1 $  consisting of functions from  $\dom(\a)=\dom(\A^{1/2})$  and $\dom(\wt\a)=\dom(\wt\A^{1/2})$, respectively,
and equipped with the norms
\begin{gather}\label{scale1}
  \begin{array}{l}
    \|f\|_{\HS^1}
    \coloneqq \|(\A+\Id)^{1/2} f\|_{\HS}
    =\left(\a[f,f]+\|f\|_{\HS}^2\right)^{1/2},\\ 
    \|u\|_{\wt\HS^1}
    \coloneqq \|(\wt\A+\Id)^{1/2}u\|_{\wt\HS}
    =\left(\wt\a[u,u]+\|u\|_{\wt\HS}^2\right)^{1/2},
  \end{array}
\end{gather} 
and spaces  $\HS^2 $ and $\wt\HS^2 $  consisting of functions from  $\dom(\A)$  and $\dom(\wt\A)$, respectively, and equipped with the norms
\begin{gather}\label{scale2}
  \|f\|_{\HS^2}\coloneqq \|(\A+\Id) f\|_{\HS},\qquad 
  \|u\|_{\wt\HS^2}\coloneqq \|(\wt\A+\Id)u\|_{\wt\HS}.
\end{gather} 
Note that
\begin{gather}\label{scale}
  \begin{array}{ccrl}
    \HS^2\subset\HS^1\subset\HS
    &\text{and}&\forall f\in\HS^2\colon
    & \|f\|_{\HS}\leq \|f\|_{\HS^1}\leq \|f\|_{\HS^2},\\[2mm]
    \wt\HS^2\subset\wt\HS^1\subset\wt\HS
    &\text{and}&\forall u\in\wt\HS^2\colon
    & \|u\|_{\wt\HS}\leq \|  u\|_{\wt\HS^1}\leq \|  u\|_{\wt\HS^2}.
  \end{array}
\end{gather}
Moreover, due to the non-negativity of $\A$ and $\wt\A$ one has the estimates
\begin{gather}
  \label{scale+}
  \forall f\in\dom(\A)\colon\
  \|\A f\|_{\HS}\leq \|  f\|_{\HS^2},\qquad
  \forall u\in\dom(\wt\A)\colon\
  \|\wt\A u\|_{\wt\HS}\leq \|  u\|_{\wt\HS^2}.
\end{gather}
 
\subsection{Resolvent convergence} 
 
\begin{theorem}
  \label{thA:1} 
  Let
  \begin{equation*}
    \J \colon \HS\to \wt{\HS},\quad 
    \wt\J \colon \wt{\HS}\to \HS,\quad 
    \J ^1 \colon {\HS^1 } \to {\wt\HS^1},\quad
    \wt\J ^{1} \colon {\wt\HS^1}\to {\HS^1}
  \end{equation*}
  be linear operators satisfying the conditions 
  \begin{align}
    \label{thA:1:1}
    \|\J f-\J^1 f\|_{\wt\HS}
    &\leq \delta\|f\|_{\HS^1 }&& \forall f\in \HS^1,\\
    \label{thA:1:2}   
    \|\wt\J u-\wt\J^{1} u\|_{ \HS}
    &\leq \delta\|u\|_{\wt\HS^1}&& \forall u\in \wt\HS^1,\\
    \label{thA:1:3}
    \left|(\J f,u)_{\wt\HS}-(f, \wt\J u)_{\HS}\right|
    &\leq \delta\|f\|_{ \HS}\|u\|_{\wt\HS}&&\forall f\in\HS, u\in\wt\HS,\\
    \label{thA:1:4}
    \left|\wt\a[\J^1 f,u]-\a[f, \wt\J^{1} u] \right|
    &\leq \delta\|f\|_{\HS^2 }\|u\|_{\wt\HS^1}&& \forall f\in\HS^2, u\in\wt\HS^1  
  \end{align}
  for some  $\delta\geq 0$. Then 
  \begin{align}
    \label{thA:diff:1}
    \|\wt\Res\J -\J \Res\|_{\HS\to \wt\HS}&\leq 4\delta.
  \end{align}
\end{theorem}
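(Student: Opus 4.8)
The plan is to reduce the operator inequality to a single scalar identity tested against an arbitrary vector, and then to invoke each of the four hypotheses \eqref{thA:1:1}--\eqref{thA:1:4} exactly once. First I would fix $f\in\HS$ and $\phi\in\wt\HS$ and put $h\coloneqq\Res f\in\dom(\A)$, $w\coloneqq\wt\Res\phi\in\dom(\wt\A)$. Since $\wt\Res$ is bounded and self-adjoint, $(\wt\Res\J f,\phi)_{\wt\HS}=(\J f,w)_{\wt\HS}$, while $(\J\Res f,\phi)_{\wt\HS}=(\J h,\phi)_{\wt\HS}$. Writing $f=(\A+\Id)h=\A h+h$ and $\phi=(\wt\A+\Id)w=\wt\A w+w$, expanding by linearity of $\J$ and of the inner product, and cancelling the common term $(\J h,w)_{\wt\HS}$, one obtains the clean identity
\begin{gather*}
  \bigl(\wt\Res\J f-\J\Res f,\ \phi\bigr)_{\wt\HS}
  =(\J\A h,w)_{\wt\HS}-(\J h,\wt\A w)_{\wt\HS}.
\end{gather*}
It then suffices to bound the right-hand side by $4\delta\|f\|_{\HS}\|\phi\|_{\wt\HS}$ and take the supremum over $\|\phi\|_{\wt\HS}=1$.

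Next I would massage the two terms so that both collapse to the same quantity $\wt\a[\J^1 h,w]$. For the first term: move $\J$ across by \eqref{thA:1:3} (with $\A h\in\HS$, $w\in\wt\HS$) to reach $(\A h,\wt\J w)_{\HS}$ up to an error $\le\delta\|\A h\|_{\HS}\|w\|_{\wt\HS}$; replace $\wt\J w$ by $\wt\J^1 w\in\HS^1$ using \eqref{thA:1:2}, which is legitimate as $w\in\wt\HS^1=\dom(\wt\a)$; invoke the representation theorem to write $(\A h,\wt\J^1 w)_{\HS}=\a[h,\wt\J^1 w]$ (here $h\in\dom\A$, $\wt\J^1 w\in\dom\a$); and finally pass to $\wt\a[\J^1 h,w]$ via \eqref{thA:1:4} with $h\in\HS^2$, $w\in\wt\HS^1$. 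For the second term: replace $\J h$ by $\J^1 h\in\wt\HS^1$ using \eqref{thA:1:1} (since $h\in\HS^1$), after which $(\J^1 h,\wt\A w)_{\wt\HS}=\wt\a[\J^1 h,w]$ holds \emph{exactly}, because $\J^1 h\in\dom\wt\a$, $w\in\dom\wt\A$ and $\wt\a$ is symmetric. Subtracting, the two copies of $\wt\a[\J^1 h,w]$ cancel and only the four error terms remain.

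It remains to estimate those four errors. Each has the form $\delta$ times a norm of $h$ (one of $\|\A h\|_{\HS}$, $\|h\|_{\HS^1}$, $\|h\|_{\HS^2}$) times a norm of $w$ (one of $\|w\|_{\wt\HS}$, $\|w\|_{\wt\HS^1}$, $\|\wt\A w\|_{\wt\HS}$). Since $h=(\A+\Id)^{-1}f$ and $\A\ge 0$, one has $\|h\|_{\HS^2}=\|f\|_{\HS}$, and by \eqref{scale}--\eqref{scale+} also $\|\A h\|_{\HS}\le\|h\|_{\HS^2}=\|f\|_{\HS}$ and $\|h\|_{\HS^1}\le\|h\|_{\HS^2}=\|f\|_{\HS}$ (the last because $\|(\A+\Id)^{-1/2}\|\le 1$); symmetrically $\|w\|_{\wt\HS},\|w\|_{\wt\HS^1},\|\wt\A w\|_{\wt\HS}\le\|\phi\|_{\wt\HS}$. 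Hence every error is at most $\delta\|f\|_{\HS}\|\phi\|_{\wt\HS}$, their sum is $4\delta\|f\|_{\HS}\|\phi\|_{\wt\HS}$, and \eqref{thA:diff:1} follows.

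The hard part is not any individual estimate but the \emph{choreography} of the trades $\J\leftrightarrow\J^1$ and $\wt\J\leftrightarrow\wt\J^1$: they must be performed exactly when the argument already lies in the appropriate form domain, so that each of the four hypotheses is used precisely once and no spurious ``quasi-adjointness defect'' terms (such as $(h,\wt\J^1 w)_{\HS}-(\J^1 h,w)_{\wt\HS}$) are generated. This careful sequencing is what produces the sharp constant $4$; everything else is bookkeeping with the scale inequalities \eqref{scale}--\eqref{scale+}.
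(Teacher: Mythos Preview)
Your proof is correct and is essentially identical to the paper's own argument: the paper fixes $g\in\HS$, $v\in\wt\HS$, sets $f=\Res g$, $u=\wt\Res v$, derives the same identity $((\wt\Res\J-\J\Res)g,v)_{\wt\HS}=(\J\A f,u)_{\wt\HS}-(\J f,\wt\A u)_{\wt\HS}$, and then splits into the very four error terms you describe (labelled $I_1,\dots,I_4$), bounding each by $\delta\|f\|_{\HS^2}\|u\|_{\wt\HS^2}=\delta\|g\|_\HS\|v\|_{\wt\HS}$ via \eqref{scale}--\eqref{scale+}. The only differences are cosmetic (variable names and the narrative ``choreography'' framing versus an explicit four-term decomposition).
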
 

\begin{proof}
  Let $g\in \HS$ and $v\in\wt\HS$ be arbitrary. We set $f\coloneqq \Res g\in\dom(\A)$, $u\coloneqq \wt\Res v\in\dom(\wt\A)$. One has
  \begin{align*} 
    ((\wt\Res\J  -\J\Res) g,v )_{\wt\HS}
    &= (\J g,\wt\Res v )_{\wt\HS}-(\J\Res g,v )_{\wt\HS}\\
    &= (\J \A f, u)_{\wt\HS}-(\J f, \wt\A u )_{\wt\HS}\\
    &= 
      \underbrace{(\J\A f,u)_{\wt\HS} - (\A f, \wt \J u)_{\HS}}_{I_1\coloneqq }
      +\underbrace{(\A f,(\wt\J  - \wt \J^1) u)_{\HS}}_{I_2\coloneqq }\\
    &\hspace{0.15\textwidth}
      +\underbrace{\a[f,\wt\J^1u]-\wt \a[\J^1 f,  u]}_{I_3\coloneqq } 
    +\underbrace{((\J^1-\J)f,\wt \A u))_{\wt\HS}}_{I_4\coloneqq }
  \end{align*}
  Using~\eqref{thA:1:1}--\eqref{thA:1:4} one can estimate all terms in the right-hand-side of the above equality,
  \begin{gather*}
    \begin{array}{ll}
      |I_1|\leq \delta\|\A f\|_{\HS}\|u\|_{\wt\HS},
      &|I_2|\leq \delta\|\A f\|_{\HS}\|u\|_{\wt\HS^1},\\
      |I_3|\leq \delta\|f\|_{\HS^2}\|u\|_{\wt\HS^1},
      &|I_4|\leq \delta\|f\|_{\HS^1}\|\wt\A u\|_{\wt\HS},
    \end{array}
  \end{gather*}
  hence, taking  into account~\eqref{scale}--\eqref{scale+}, we arrive at the estimate
  \begin{gather}
    \label{RRfg:abs}
    \forall g\in\HS,\, v\in\wt\HS \colon\quad
    |((\wt\Res\J  -\J\Res) g,v )_{\wt\HS}|
    \leq 4\delta \|f\|_{\HS^2}\|u\|_{\wt\HS^2}
    = 4\delta \|g\|_{\HS}\|v\|_{\wt\HS}.
  \end{gather}
  Evidently,~\eqref{RRfg:abs} implies~\eqref{thA:diff:1}.  The theorem is proven.
\end{proof}

\begin{remark}
It is well-known~\cite[Theorem~VI.3.6]{Ka66}, that convergence of sesquilinear forms with \emph{common domain} implies norm resolvent convergence of the associated operators (see the recent paper~\cite[Theorem~2]{BF18} for a quantitative version of this result); in these theorems
the convergence of the forms $\a\e$ to the form $\a$ means that the inequality  
\begin{gather}\label{form:dist:1}
\left|\a\e[f,f]-\a[f,f] \right|\leq 
  \delta\e \left(\a[f,f]+\|f\|^2_{\HS}\right),\quad \delta\e\to 0,
\end{gather}
holds for each $f\in\dom(\a\e)=\dom(\a)$.
In this sense, Theorem~\ref{thA:1} can be regarded as a generalization of~\cite[Theorem~VI.3.6]{Ka66},~\cite[Theorem~2]{BF18}   to the setting of varying spaces.
\end{remark}

\begin{remark}
It is easy to see from the proof above, that
some of the conditions~\eqref{thA:1:1}--\eqref{thA:1:4} can be weakened.
For example, Theorem~\ref{thA:1}
remains valid if \eqref{thA:1:4} is substituted 
by 
\begin{gather}\label{form:dist:2}
  \left|\wt\a[\J^1 f,u]-\a[f, \wt\J^{1} u] \right|\leq 
  \delta\|f\|_{\HS^2 }\|u\|_{\wt\HS^2},\quad 
  \forall f\in \HS^2 ,\ u\in \wt\HS^2.
\end{gather} 
Nevertheless, in most of the  applications one is able to establish stronger estimate~\eqref{thA:1:4}
(cf. Lemma~\ref{lm2}). Moreover, sometimes (for example, when studying convergence of graph-like manifolds~\cite{P06,P12}), one even can prove the stronger 
inequality
\begin{gather*}
  \Bigl|\wt\a[\J^1 f,u]-\a[f, \wt\J^{1} u] \Bigr|
  \leq \delta\|f\|_{\HS^1 }\|u\|_{\wt\HS^1},\quad 
  \forall f\in \HS^1 ,\ u\in \wt\HS^1,
\end{gather*} 
which can be regarded as a counterpart to~\eqref{form:dist:1}.
\end{remark} 

\begin{remark}
Usually in  applications the operators $\J$ and $\wt\J$ appear in a natural way (as, for example, $\J$ defined in~\eqref{J} and $\wt\J$ defined in~\eqref{J'} in our case), while the other two operators $\J^1$ and $\wt\J^1$ should be constructed as ``almost'' restrictions of $\J$ and $\wt\J$ to $\HS^1$ and $\wt\HS^1$, respectively, modified in such a way that they respect the form domains (see conditions~\eqref{thA:1:1}--\eqref{thA:1:2} above).
\end{remark} 

\subsection{Spectral convergence} 

Recall that  the Hausdorff distance  $d_\Hausdorff(\cdot,\cdot)$ is defined via~\eqref{dH}. 
It is well-known, that norm convergence of bounded self-adjoint operators in a \emph{fixed} Hilbert space implies Hausdorff convergence of spectra of the underlying resolvents.
Namely, let $\Res$ and $\wt\Res$ be bounded self-adjoint operators in a Hilbert space $\HS$, then~\cite[Lemma~A.1]{HN99}
\begin{equation}
  \label{eq:spec.est.hn99}
  d_\Hausdorff(\sigma(\Res),\sigma(\wt\Res))
  \leq \|\Res-\wt\Res\|_{\HS\to\HS}
\end{equation}
(in fact, the above results holds even for normal operators).
Our goal is to find an analogue of this result for the case of operators acting in different Hilbert spaces.
In what follows, we assume that the operators $\wt\A$ and $\A$ are unbounded, whence, $0\in \sigma(\wt\Res)\cap\sigma(\Res)$.

\begin{theorem}\label{thA:2}
  Let  
  $\J \colon\HS\to \wt{\HS}$, $\wt\J \colon\wt\HS\to {\HS} $ be  linear bounded operators satisfying 
  \begin{align}
    \label{thA:2:1}
    \|\wt\Res\J - \J \Res \|_{\HS\to \wt\HS}&\leq \eta,\\
    \label{thA:2:3}
    \|\wt\J\wt\Res - \Res\wt \J  \|_{\wt\HS\to  \HS}&\leq \wt\eta,
  \end{align}
  and, moreover,
  \begin{align}
    \label{thA:2:2}
    \|f\|^2_{ \HS}&\leq \mu \|\J  f\|^2_{\wt\HS}+\nu \,  \a[f,f],
                    \quad \forall f\in \dom( \a),\\
    \label{thA:2:4}
    \|u\|^2_{\wt\HS}&\leq \wt\mu \|\wt\J  u\|^2_{\HS}+\wt\nu \, \wt\a[u,u],
                      \quad \forall u\in \dom( \wt\a),
  \end{align}
  for some positive constants $\eta$, $\mu$, $\nu$, $\wt\eta$,
  $\wt\mu$ and $\wt\nu$.  Then for any $\kappa$, $\wt\kappa \in (0,1)$
  we have
  \begin{gather*}
    d_\Hausdorff\left(\sigma(\Res),\, \sigma(\wt\Res)\right)
    \leq 
    \max\left\{
      \eta \sqrt{\frac\mu\kappa};\,
      \frac{\nu}{1-\kappa};\,
      \wt\eta \sqrt{\frac{\wt\mu}{\wt\kappa}};\,
      \frac{\wt\nu}{1-\wt\kappa}
    \right\}.
  \end{gather*}
\end{theorem}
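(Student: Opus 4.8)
The plan is to reduce the statement to the fixed-space estimate~\eqref{eq:spec.est.hn99} by comparing the spectrum of each resolvent with that of a suitable operator obtained by "transplanting" it to the other Hilbert space. Concretely, I would work on the level of spectral subspaces: recall that for a self-adjoint operator, a point $\lambda$ lies within distance $d$ of the spectrum of $\Res$ iff there is an approximate eigenvector, i.e.\ $\|\Res f - \lambda f\| \le d\|f\|$ for some $f \neq 0$. So to bound $\sup_{\wt\lambda \in \sigma(\wt\Res)} \dist(\wt\lambda, \sigma(\Res))$, I would fix $\wt\lambda \in \sigma(\wt\Res)$, pick an approximate eigenvector $u \in \wt\HS$ with $\|(\wt\Res - \wt\lambda)u\|_{\wt\HS}$ small and $\|u\|_{\wt\HS}=1$, and then show that $\wt\J u$ is an approximate eigenvector of $\Res$ for the same $\wt\lambda$, with an error controlled by $\wt\eta$, $\wt\mu$, $\wt\nu$. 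The symmetric argument with $\J$, $\eta$, $\mu$, $\nu$ controls the other half of the Hausdorff distance.

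The key computation for one direction: writing $f \coloneqq \wt\J u$, I estimate
\begin{align*}
  \|(\Res - \wt\lambda)f\|_{\HS}
  &\le \|\Res \wt\J u - \wt\J \wt\Res u\|_{\HS}
     + \|\wt\J(\wt\Res - \wt\lambda)u\|_{\HS}\\
  &\le \wt\eta\,\|u\|_{\wt\HS} + \|\wt\J\|\,\|(\wt\Res - \wt\lambda)u\|_{\wt\HS},
\end{align*}
so the numerator is small provided $u$ is a good approximate eigenvector. The subtlety is the \emph{denominator}: $f = \wt\J u$ could a priori be very small (even zero), which would wreck the estimate $\|(\Res-\wt\lambda)f\| \le d \|f\|$. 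This is exactly where~\eqref{thA:2:4} enters. Since $\wt\lambda \in \sigma(\wt\Res) \subset (0,1]$ (the operators are unbounded, so $0 \in \sigma(\wt\Res)$ and $\sigma(\wt\Res) \subseteq [0,1]$), one can relate $\wt\a[v,v]$ for the "Cayley-transformed" vector $v$ to $\wt\lambda$ and the resolvent equation, and~\eqref{thA:2:4} then yields a lower bound $\|\wt\J u\|_{\HS}^2 \gtrsim \|u\|_{\wt\HS}^2$ modulo the form term, which is itself small when $u$ is a good approximate eigenvector of $\wt\Res$ at $\wt\lambda$. Balancing the "almost isometry" defect against the approximate-eigenvector defect is precisely what produces the two competing terms $\wt\eta\sqrt{\wt\mu/\wt\kappa}$ and $\wt\nu/(1-\wt\kappa)$, with $\wt\kappa$ the free splitting parameter.

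In more detail for the lower bound: given $u$ with $\|u\|_{\wt\HS}=1$ and $r \coloneqq \|(\wt\Res-\wt\lambda)u\|_{\wt\HS}$ small, set $v \coloneqq \wt\Res u \in \dom(\wt\a)$, so that $v \approx \wt\lambda u$ up to error $r$. A short manipulation of the identity $(\wt\A+\Id)v = u$ gives $\wt\a[v,v] = (u,v)_{\wt\HS} - \|v\|_{\wt\HS}^2 \le \|v\|_{\wt\HS}(1 - \|v\|_{\wt\HS})$; since $\|v\|_{\wt\HS} = \|\wt\Res u\|_{\wt\HS}$ is within $r$ of $\wt\lambda$ and $\wt\lambda \le 1$, this bounds $\wt\a[v,v]$ by something $O(r)$ (up to the fixed scale $\wt\lambda(1-\wt\lambda)$ one would like to keep track of carefully, but for the Hausdorff estimate the cleaner route is to run the argument with $v$ directly and use $\|u - \wt\lambda^{-1} v\|$ small). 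Then~\eqref{thA:2:4} applied to $v$ gives $\|v\|_{\wt\HS}^2 \le \wt\mu\|\wt\J v\|_{\HS}^2 + \wt\nu\,\wt\a[v,v]$, i.e.\ $\|\wt\J v\|_{\HS}^2 \ge \wt\mu^{-1}(\|v\|_{\wt\HS}^2 - \wt\nu\,\wt\a[v,v])$, which is bounded below. Feeding this into $\|(\Res-\wt\lambda)\wt\J v\|_\HS \le \wt\eta\|v\|_{\wt\HS} + \|(\wt\Res-\wt\lambda)v\|_{\wt\HS}$ and dividing, one reads off $\dist(\wt\lambda,\sigma(\Res)) \le \max\{\wt\eta\sqrt{\wt\mu/\wt\kappa},\ \wt\nu/(1-\wt\kappa)\}$ after choosing the approximate eigenvector sharply (letting $r \to 0$) and optimizing. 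Running the mirror-image argument with the roles of $\HS,\wt\HS$ swapped and using~\eqref{thA:2:1}, \eqref{thA:2:2} gives the other two terms, and taking the max over all four yields the claim.

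I expect the \textbf{main obstacle} to be the bookkeeping in the lower bound $\|\wt\J u\|_{\HS} \gtrsim \|u\|_{\wt\HS}$: one must be careful that $\wt\lambda$ ranges over all of $[0,1]$ including the troublesome endpoint $\wt\lambda = 0$ (where $u$ is an approximate null vector of $\wt\Res$, i.e.\ $\wt\a[u,u]$ is large — but there the approximate eigenvector has $\|\wt\Res u\|$ small, and $0 \in \sigma(\Res)$ too, so that case is actually free) and $\wt\lambda$ near $1$ (where $\wt\a[v,v]$ small forces control). Getting a clean, uniform-in-$\wt\lambda$ inequality out of~\eqref{thA:2:4} without losing the structure of the four-term maximum, and correctly tracking where the free parameters $\kappa,\wt\kappa$ should be inserted, is the delicate part; everything else is the triangle inequality plus the resolvent identity.
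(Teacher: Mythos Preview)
Your approach is the paper's: transplant approximate eigenvectors via $\J$ (resp.\ $\wt\J$), bound the numerator by~\eqref{thA:2:1}--\eqref{thA:2:3}, and bound the denominator below via~\eqref{thA:2:2}--\eqref{thA:2:4}. One streamlining the paper makes: rather than picking $u$ an approximate eigenvector of $\wt\Res$ and then passing to $v=\wt\Res u$ to land in $\dom(\wt\a)$, it directly takes $\psi_\rho\in\dom(\A)$ with $\|(\A-\lambda_z)\psi_\rho\|\le\rho$ where $\lambda_z=(1-z)/z$; this gives $\a[\psi_\rho,\psi_\rho]=\lambda_z+O(\rho)$ in one line and avoids the back-and-forth between $u$ and $v$.

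The one place your sketch is imprecise is the source of the pair $\eta\sqrt{\mu/\kappa}$, $\nu/(1-\kappa)$. They do \emph{not} arise from ``balancing defects'' within a single inequality; they come from an explicit case split on $z\in\sigma(\Res)$. The paper sets the threshold $L_\nu\coloneqq\nu/(\nu+1-\kappa)$. For $z\ge L_\nu$ one has $\nu\lambda_z\le 1-\kappa$, hence $\mu\|\J\psi_\rho\|^2\ge 1-\nu\lambda_z-\rho\nu\ge\kappa-\rho\nu$, and the transplant estimate gives $\dist(z,\sigma(\wt\Res))\le\eta\sqrt{\mu/\kappa}$ after $\rho\to 0$. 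For $z\in[0,L_\nu]$ the form value $\lambda_z$ is too large for~\eqref{thA:2:2} to yield a useful lower bound on $\|\J\psi_\rho\|$, but here one simply invokes $0\in\sigma(\wt\Res)$ to get $\dist(z,\sigma(\wt\Res))\le z\le L_\nu\le\nu/(1-\kappa)$. Your remark that ``the $\wt\lambda=0$ case is free'' is exactly the right idea, but it has to cover the whole interval $[0,L_\nu]$, not just the endpoint; once you make this split explicit, the four-term maximum falls out cleanly.
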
 

\begin{remark}
  \indent
  \begin{enumerate}
  \item In a typical application, $\mu \ge 1$ is close to $1$, and
    $\nu>0$ is small, as if $\J$is an isometry, then $\mu=1$ and
    $\nu=0$.  A similar remark holds for $\wt \mu$ and $\wt \nu$.  In
    our application later, we have $\mu=1$ and $\nu=0$ (hence, we are allowed to take $\kappa=1$ by a limit argument), and
    $\wt \mu=\mu_\eps=1+ C_6 \eps^{2\alpha}$ and
    $\wt \nu= \nu_\eps=C_{17} \eps^{2\min\{\alpha,\beta\}}$ (see
    Lemma~\ref{lm3}).

  \item Note that the upper bound $\eta \sqrt{\mu/\kappa}$ arises from
    values of $\sigma(\Res)$ closely below $1$, i.e.\ from \emph{small}
    values of $\sigma(\A)$ whereas the upper bound $\nu/(1-\kappa)$
    arises from values of $\sigma(\Res)$ near $0$, i.e.\ from
    \emph{large} values of $\sigma(\A)$. A similar remark holds for
    $\wt \Res$ and $\wt\A$.

  \item The role of $\kappa$ (and $\wt \kappa$) is as follows: One
    can, of course, fix $\kappa=\wt \kappa=1/2$, then the error is the
    maximum of $\eta \sqrt{2\mu}$, $2\nu$, $\wt \eta \sqrt{2\wt \mu}$
    and $2\wt \nu$.  Herbst and Nakamura proved in the classical case
    an estimate of the Hausdorff distance of the spectra in terms of
    $\norm{\Res-\wt \Res}_{\HS \to \HS}$,
    cf.~\eqref{eq:spec.est.hn99}.  If we are aiming in a similar
    result, we have to use the two norms
    in~\eqref{thA:2:1}--\eqref{thA:2:3}.  The constants $\kappa$ and
    $\wt \kappa$ allow to estimate the Hausdorff distance of the
    spectra in terms of $\eta$ and $\wt \eta$ \emph{with a constant as
      close to $1$ as wanted}.  The price of this factor to be close
    to $1$ is then a worse estimate in the second term, namely
    $\nu/(1-\kappa)$.
  \end{enumerate}
\end{remark}

\begin{proof}[Proof of Theorem~\ref{thA:2}] 
  For each $z\in\C$ one has the   estimate
  \begin{gather}\label{Haus1}
    \forall\phi\in\wt\HS\setminus\{0\} \colon \quad 
    \dist(z,\sigma(\wt\Res ))
    \leq \frac{\|(\wt\Res -z\Id)\phi\|_{\wt\HS}}{\|\phi\|_{\wt\HS}}
  \end{gather}
  (hereinafter for $x\in\R$ and a compact set $Y\subset\R$ we denote $\dist(x,Y)\coloneqq \inf_{y\in Y}|x-y|$).
  Indeed, for $z\in \sigma(\wt\Res)$  estimate~\eqref{Haus1} is trivial, while for 
  $z \in \C\setminus\sigma(\wt\Res)$ it follows easily from
  \begin{equation*}
    \|(\wt\Res -z\Id)^{-1}\|_{\wt\HS}
    =\frac 1 {\dist(z,\,\sigma(\wt\Res))}.
  \end{equation*}
  
  In what follows we assume that $z\in\sigma(\Res )\cap[ L_\nu,1]$, where 
  \begin{gather}
    \label{Lnu}
    L_\nu\coloneqq  \frac{\nu}{\nu+(1-\kappa)}\in (0,1).
  \end{gather}
  We denote $\lambda_z\coloneqq \frac{1-z}z$. It is easy to see that the following identity holds:
  \begin{gather}
    \label{identity}
    (\Res -z\Id)\psi =-z\, \Res(\A-\lambda_z\Id )\psi,\quad\psi\in\dom(\A).
  \end{gather}
  Moreover, by the spectral mapping theorem $\lambda_z\in \sigma(\A) $ and hence 
  \begin{gather}\label{weyl}
    \forall{\rho}>0\quad \exists\psi_{\rho}\in\dom(\A)\colon\quad
    \|\psi_{\rho}\|_{\HS}  =1,\quad 
    \|(\A-\lambda_z\Id)\psi_{\rho}\|_{\HS}\leq {\rho}. 
  \end{gather} 
  Taking into account that $z\in (0,1]$ and $\|\Res\|_{\HS\to\HS}\leq 1$,
  one gets   from~\eqref{identity}--\eqref{weyl}
  \begin{gather}\label{Haus2}
    \|(\Res -z\Id)\psi_{\rho}\|_{\HS}\leq   {\rho}.
  \end{gather}
  
  Using~\eqref{thA:2:2},~\eqref{weyl} and taking into account that 
  \begin{gather}\label{lambdaz}
    \lambda_z\leq \frac{1-L_\nu}{L_\nu},
  \end{gather} 
  one can prove that $\J\e\psi_{\rho}\not=0$ for small enough ${\rho}$.
  Indeed,    we get
  \begin{align}\nonumber
    \mu\| \J\psi_{\rho}\|_{\wt\HS}^2
    &\overset{\eqref{thA:2:2}}\geq \|\psi_{\rho}\|_{\HS}^2-\nu\,\a[\psi_{\rho},\psi_{\rho}]
    =
    (1-\lambda_z \nu)\|\psi_{\rho}\|_{\HS}^2 -\nu(\A\psi_{\rho}
      -\lambda_z\psi_{\rho},\psi_{\rho})_{\HS}\\
    \label{Jnot0}
    &\overset{\eqref{weyl},\,\eqref{lambdaz}}\geq
    1-\frac{1-L_\nu}{L_\nu}\nu-{\rho}\nu
    = \kappa-\rho\nu
  \end{align}
  (we use \eqref{Lnu} for the last equality).
   Hence $\J\e\psi_{\rho}\not=0$
  as $\rho<\kappa/\nu$.
  
  For $z\in\sigma(\Res)\cap[L_\nu,1]$, ${\rho}\in (0,\kappa/\nu)$ we obtain
  using~\eqref{Haus1},~\eqref{Haus2} and~\eqref{Jnot0}: 
  \begin{align*}
    \dist(z,\,\sigma(\wt\Res)) 
    &\leq  
      \frac{\|(\wt\Res-z\Id) \J \psi_{\rho}\|_{\wt\HS}}
      {\| \J \psi_{\rho}\|_{\wt\HS}}\\
    &\leq 
      \frac{\|(\wt\Res\J - \J   \Res)\psi_{\rho}\|_{\wt\HS}+
      \| \J (\mathcal{R} - z\Id)\psi_{\rho}\|_{\wt\HS}}
      {\| \J \psi_{\rho}\|_{\wt\HS}} 
      \leq
      \frac{\eta+\|\J\|_{\HS\to\wt\HS} \cdot{\rho}}
      {\sqrt{\mu^{-1}\left(\kappa-{\rho}\nu\right)}}.
  \end{align*}
  Passing to the limit $\rho\to 0$   we arrive at the estimate
  \begin{gather}\label{Haus3}
    \dist(z,\sigma(\wt\Res))
    \leq  
    {\eta \sqrt{\frac \mu\kappa}},\quad
    \forall z\in \sigma(\mathcal{R})\cap[L_\nu,1].
  \end{gather} 
  Finally, taking into account that $0\in\sigma(\wt\Res)$ we also get
  \begin{gather}\label{Haus4}
    \dist(z,\sigma(\wt\Res))\leq \dist(z,0)\leq
    L_\nu,\quad \forall z\in \sigma(\Res)\cap[0,L_\nu].
  \end{gather}
  Combining~\eqref{Haus3}--\eqref{Haus4} and $L_\nu \le \nu/(1-\kappa)$ we obtain
  \begin{gather}\label{Haus5}
    \dist(z,\sigma(\wt\Res))
    \leq
    \max\left\{\eta \sqrt{\frac\mu\kappa};\,
      \frac \nu{1-\kappa}\right\},\quad \forall z\in \sigma(\Res) .
  \end{gather}
  Repeating verbatim the above arguments we also obtain the estimate
  \begin{gather}\label{Haus6}
    \dist(z,\sigma( \Res))
    \leq
    \max\left\{\wt\eta \sqrt{\frac{\wt\mu}{\wt \kappa}};\,
      \frac{\wt\nu}{1-\wt\kappa}\right\},\quad \forall z\in \sigma(\wt\Res).
  \end{gather}
  The statement of the theorem follows immediately from~\eqref{Haus5}--\eqref{Haus6} and~\eqref{dH}. 
\end{proof}

\subsection{Quasi-unitary operators} 
\label{sec:quasi-uni}

Let us here finally comment on the concept originally introduced in~\cite{P06} and~\cite{P12}:
\begin{definition}
  \label{def:quasi-uni}
  We say that $\J$ and $\wt \J$ are \emph{$\delta$-quasi-unitary} for some $\delta \ge 0$ if
  \begin{align}
    \label{eq:quasi-uni:1}
    \norm{f - \wt\J \J f}_\HS &\le \delta \norm f_{\HS^1}, 
    && \forall f\in \HS^1,\\
    \label{eq:quasi-uni:2}
    \norm{u - \J \wt\J u}_\HS &\le \delta \norm u_{\wt \HS^1},
    && \forall u\in \wt\HS^1,
  \end{align}
  and also~\eqref{thA:1:3} holds.
  We say that $\A$ and $\wt\A$ are \emph{$\delta$-quasi-unitarily equivalent}, if (additionally to~\eqref{eq:quasi-uni:1}--\eqref{eq:quasi-uni:2})  $\J$ and $\wt \J$ also fulfil
 \begin{gather*}
    \|\wt\Res\J -\J \Res\|_{\HS\to \wt\HS}\leq \delta,\qquad
    \|\wt\J\wt\Res -\Res\wt\J \|_{\wt\HS\to  \HS}\leq \delta.
  \end{gather*}  
\end{definition}
The above concept allows to generalise norm resolvent convergence in
the sense that $\A_\eps$ converges to $\A_0$ in \emph{generalised norm
  resolvent sense} (with convergence speed $\delta_\eps$) if $\A_\eps$
and $\A_0$ are $\delta_\eps$-quasi-unitarily equivalent with
$\delta_\eps \to 0$ as $\eps \to 0$ (cf.~\eqref{HHAA}).  This concept generalises the classical norm resolvent convergence in the sense that if
$\HS_\eps=\HS_0=\HS$ and choosing $\J=\wt\J=\Id$ the identity operator
on $\HS$, then the generalised norm resolvent convergence is just the
classical norm resolvent convergence
$\norm{\Res_\eps - \Res_0}_{\HS \to \HS} \le \delta_\eps \to 0$ as
$\eps \to 0$.

 As for the classical norm resolvent convergence we have~\cite{P06,P12} (see also~\cite{PS19} for a brief up-to-date version and more details):
 \begin{proposition}[{\cite[Sec..~1.3]{PS19},~\cite[App.~A.4--A.5]{P06}}]
   \label{prp:conv.op.fcts}
   If $\A_\eps$ converges to $\A_0$ in generalised norm resolvent convergence with convergence speed $\delta_\eps$, then we have
   \begin{equation*}
     \norm{\Psi(\A_\eps )-\J_\eps \Psi(\A_0) \wt \J_\eps}_{\HS\e\to\HS\e} \to 0
   \end{equation*}
   as $\eps \to 0$ for suitable functions $\Psi$ (e.g.\ measurable and continuous in a neighbourhood $U$ of $\sigma(\A_0)$ with $\Psi(\lambda)(\lambda+1)^{1/2}\to 0$ as $\lambda \to \infty$.  If $\Psi$ is holomorphic on $U$, then the above norm of the resolvent difference is of order $\delta_\eps$. 
 \end{proposition}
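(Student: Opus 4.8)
The plan is to reduce the claim to the single function $\lambda\mapsto(\lambda+1)^{-1}$ and then bootstrap. By the definition of generalised norm resolvent convergence, the operators $\J\e$, $\wt\J\e$ obey~\eqref{eq:quasi-uni:1}--\eqref{eq:quasi-uni:2} and~\eqref{thA:1:3}, as well as $\|\Res\e\J\e-\J\e\Res_0\|_{\HS_0\to\HS\e}\le\delta\e$ and $\|\wt\J\e\Res\e-\Res_0\wt\J\e\|_{\HS\e\to\HS_0}\le\delta\e$; in addition $\sup_\eps(\|\J\e\|+\|\wt\J\e\|)<\infty$ and $\|\Res\e\|,\|\Res_0\|\le1$. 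The first step is the \emph{base estimate}
\[
  \|\Res\e-\J\e\Res_0\wt\J\e\|_{\HS\e\to\HS\e}\le C\delta\e ,
\]
obtained from the splitting $\Res\e-\J\e\Res_0\wt\J\e=(\Id-\J\e\wt\J\e)\Res\e+\J\e(\wt\J\e\Res\e-\Res_0\wt\J\e)$: the second summand has norm at most $\|\J\e\|\,\delta\e$, while for the first one I apply~\eqref{eq:quasi-uni:2} with $u=\Res\e g$ and invoke the regularising bound $\|(\A\e+\Id)^{1/2}\Res\e g\|_{\HS\e}=\|(\A\e+\Id)^{-1/2}g\|_{\HS\e}\le\|g\|_{\HS\e}$, so that $\|(\Id-\J\e\wt\J\e)\Res\e g\|_{\HS\e}\le\delta\e\|g\|_{\HS\e}$.

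The second step transports the base estimate to polynomials. The telescoping identity
\[
  \Res\e^{\,n}-\J\e\Res_0^{\,n}\wt\J\e
  =\Res\e\bigl(\Res\e^{\,n-1}-\J\e\Res_0^{\,n-1}\wt\J\e\bigr)+(\Res\e\J\e-\J\e\Res_0)\,\Res_0^{\,n-1}\wt\J\e ,
\]
together with the base estimate and the contractivity of $\Res\e$ and $\Res_0$, gives by induction $\|\Res\e^{\,n}-\J\e\Res_0^{\,n}\wt\J\e\|_{\HS\e\to\HS\e}\le C_n\delta\e$, and hence $\|p(\Res\e)-\J\e\,p(\Res_0)\wt\J\e\|_{\HS\e\to\HS\e}\le C(p)\delta\e$ for every polynomial $p$ \emph{without constant term} --- the constant term being excluded because it would contribute $\Id-\J\e\wt\J\e$, which is small only on the form domain. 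For a general admissible $\Psi$, I pass to $\wt\Psi(z)\coloneqq\Psi\bigl(\tfrac{1-z}{z}\bigr)$ for $z\in(0,1]$, $\wt\Psi(0)\coloneqq0$; then $\Psi(\A\e)=\wt\Psi(\Res\e)$ and $\Psi(\A_0)=\wt\Psi(\Res_0)$ by the spectral mapping $\lambda\mapsto(\lambda+1)^{-1}$, and the hypotheses on $\Psi$ make $\wt\Psi$ continuous on some compact neighbourhood $K\subset[0,1]$ of $\sigma(\Res_0)$ --- in particular the decay of $\Psi$ forces $\wt\Psi(z)\to0=\wt\Psi(0)$ as $z\to0^+$, and $0\in\sigma(\Res_0)$ since $\A_0$ is unbounded. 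Since generalised norm resolvent convergence entails Hausdorff convergence $\sigma(\Res\e)\to\sigma(\Res_0)$ (cf.\ Theorem~\ref{thA:2}) and $\sigma(\Res\e)\subset[0,1]$, we have $\sigma(\Res\e)\subset K$ for all small $\eps$. Given $\rho>0$, the Weierstrass theorem provides a polynomial $p$ with $p(0)=0$ and $\sup_K|\wt\Psi-p|<\rho$, whence $\|\Psi(\A\e)-p(\Res\e)\|\le\sup_{\sigma(\Res\e)}|\wt\Psi-p|\le\rho$ and likewise for $\Res_0$; the triangle inequality
\[
  \|\Psi(\A\e)-\J\e\Psi(\A_0)\wt\J\e\|\le\|\Psi(\A\e)-p(\Res\e)\|+\|p(\Res\e)-\J\e\,p(\Res_0)\wt\J\e\|+\|\J\e\|\,\|\wt\J\e\|\,\|p(\Res_0)-\Psi(\A_0)\|
\]
then yields $\limsup_{\eps\to0}\|\Psi(\A\e)-\J\e\Psi(\A_0)\wt\J\e\|\le\bigl(1+\sup_\eps\|\J\e\|\,\|\wt\J\e\|\bigr)\rho$, which is $0$ since $\rho>0$ was arbitrary.

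When $\Psi$ is holomorphic the same scheme delivers the rate $O(\delta\e)$ via the Riesz--Dunford calculus. In this case $\wt\Psi$ extends holomorphically to an open neighbourhood $V\subset\C$ of $\sigma(\Res_0)$ (the point $0\in\sigma(\Res_0)$ being covered, if needed, through boundedness of $\wt\Psi$ near $0$ and Riemann's removable-singularity theorem), with $\wt\Psi(0)=0$, and $\sigma(\Res\e)\subset V$ for small $\eps$. Choosing a cycle $\gamma\subset V$ of winding number $1$ about $\sigma(\Res_0)\cup\bigcup_{\eps\le\eps_1}\sigma(\Res\e)$ --- hence also about $0$ --- and with $\dist(\gamma,\sigma(\Res\e))\ge d_0>0$, I write
\[
  \wt\Psi(\Res\e)-\J\e\wt\Psi(\Res_0)\wt\J\e=\frac{1}{2\pi\mathrm{i}}\oint_\gamma\wt\Psi(\zeta)\Bigl[(\zeta-\Res\e)^{-1}-\J\e(\zeta-\Res_0)^{-1}\wt\J\e\Bigr]\d\zeta .
\]
Inserting $(\zeta-\Res\e)^{-1}=\zeta^{-1}\Id+\zeta^{-1}\Res\e(\zeta-\Res\e)^{-1}$ (and the analogous identity for $\Res_0$) and using $\oint_\gamma\zeta^{-1}\wt\Psi(\zeta)\,\d\zeta=2\pi\mathrm{i}\,\wt\Psi(0)=0$, the ``$\zeta^{-1}\Id$''-terms cancel --- this is exactly where $\wt\Psi(0)=0$, i.e.\ the decay of $\Psi$, enters --- and there remains $\frac{1}{2\pi\mathrm{i}}\oint_\gamma\zeta^{-1}\wt\Psi(\zeta)\bigl[(\zeta-\Res\e)^{-1}\Res\e-\J\e(\zeta-\Res_0)^{-1}\Res_0\wt\J\e\bigr]\d\zeta$. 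Writing the bracket as $(\zeta-\Res\e)^{-1}(\Res\e-\J\e\Res_0\wt\J\e)+(\zeta-\Res\e)^{-1}(\Res\e\J\e-\J\e\Res_0)(\zeta-\Res_0)^{-1}\Res_0\wt\J\e$ and bounding it on the compact curve $\gamma$ --- where $|\zeta|\ge d_0$ and $\|(\zeta-\Res\e)^{-1}\|,\|(\zeta-\Res_0)^{-1}\|\le d_0^{-1}$ --- via the base estimate and $\|\Res\e\J\e-\J\e\Res_0\|\le\delta\e$ produces an integrand of norm $\le C\delta\e$, and hence $\|\Psi(\A\e)-\J\e\Psi(\A_0)\wt\J\e\|\le C''\delta\e$.

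The main obstacle is the base estimate and, more broadly, keeping track of the two relevant scales on $\HS\e$ (plain norm versus form-graph norm): the error $\Id-\J\e\wt\J\e$ is \emph{not} small in operator norm but only as a map out of the form domain, so at every occurrence one must ``spend'' one resolvent --- equivalently one power of $(\A\e+\Id)^{-1/2}$ --- to absorb it; this is also the reason a constant term must be split off in the polynomial step, and why it is convenient that the decay hypothesis on $\Psi$ forces $\wt\Psi(0)=0$ with $\wt\Psi$ continuous up to $z=0$. In the holomorphic case the only additional delicate point is the cancellation, noted above, of the identity-type terms under the contour integral.
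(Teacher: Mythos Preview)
The paper does not give its own proof of this proposition; it merely cites~\cite{PS19} and~\cite{P06}. Your sketch is essentially the standard argument used in those references: first establish the ``sandwiched'' base estimate $\|\Res\e-\J\e\Res_0\wt\J\e\|\le C\delta\e$ by spending one resolvent to control $(\Id-\J\e\wt\J\e)$ via~\eqref{eq:quasi-uni:2}, then telescope to polynomials in $\Res\e$ without constant term, and finally pass to general $\Psi$ by Stone--Weierstrass (measurable/continuous case) or the Riesz--Dunford contour integral (holomorphic case). The algebra is correct, including the decomposition of $(\zeta-\Res\e)^{-1}\Res\e-\J\e(\zeta-\Res_0)^{-1}\Res_0\wt\J\e$ and the cancellation of the $\zeta^{-1}\Id$-terms via $\wt\Psi(0)=0$.

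Two small points worth tightening. First, you tacitly use $\sup_\eps(\|\J\e\|+\|\wt\J\e\|)<\infty$; this is not literally part of Definition~\ref{def:quasi-uni} as stated in the paper, though it is part of the original notion in~\cite{P06,P12} (and automatic in the present application where $\J\e$ is an isometry). Second, in the holomorphic case your appeal to Riemann's removable-singularity theorem at $z=0$ needs $\wt\Psi$ to be holomorphic on a \emph{full} punctured disc about $0$, i.e.\ $\Psi$ holomorphic on a neighbourhood of $\infty$; a mere neighbourhood $U$ of $\sigma(\A_0)$ need not contain $\{|\lambda|>R\}$. This is really a matter of what ``suitable'' means in the (deliberately informal) statement, not a flaw in your method---the references impose precisely such decay/analyticity at infinity.
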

The above proposition applies in particular to the heat operator with $\Psi=\Psi_t$ and $\Psi_t(\lambda)=\euler^{-\lambda t}$ or spectral projections $\Psi=\1_I$ and $\partial I \cap \sigma(\A_0)=\emptyset$.  We also showed spectral convergence, see~\cite{P06,P12} for details.  Nevertheless, the result in Theorem~\ref{thA:2} is more explicit as it imitates the proof of~\eqref{eq:spec.est.hn99} of Herbst and Nakamura~\cite{HN99} and gives better error estimates.

The above concept of quasi-unitary operators implies the spectral convergence as in Theorem~\ref{thA:2}:
\begin{proposition}
  \label{prp:quasi-uni-spec-conv}
  Assume that $\J$ and $\wt\J$ are $\delta$-quasi-unitary with $\delta < 2/3$.  Then the assumptions~\eqref{thA:2:2}--\eqref{thA:2:4} in Theorem~\ref{thA:2} are fulfilled with
  \begin{equation*}
    \mu=\wt \mu
    =1+\frac{4\delta}{2-3\delta}
      \qquad\text{and}\qquad
    \nu=\wt \nu
    =\frac \delta{2-3\delta}.
  \end{equation*}
\end{proposition}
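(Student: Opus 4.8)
The plan is to derive the quantitative inequalities~\eqref{thA:2:2} and~\eqref{thA:2:4} of Theorem~\ref{thA:2} directly from the quasi-unitarity conditions~\eqref{eq:quasi-uni:1}--\eqref{eq:quasi-uni:2}, using only elementary Hilbert space manipulations together with the norm bound $\norm{\wt\J}_{\wt\HS\to\HS}\le 1+\delta$ (which itself follows from~\eqref{thA:1:3} since $\wt\J$ is a near-adjoint of the near-isometry $\J$, or can be taken as part of the standing assumptions on quasi-unitary operators). By symmetry it suffices to establish~\eqref{thA:2:2}; the estimate~\eqref{thA:2:4} then follows by interchanging the roles of the two spaces.

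First I would fix $f\in\dom(\a)=\HS^1$ and write $f=\wt\J\J f+(f-\wt\J\J f)$. Applying the triangle inequality and then~\eqref{eq:quasi-uni:1} gives
\begin{equation*}
  \norm f_\HS
  \le \norm{\wt\J\J f}_\HS+\norm{f-\wt\J\J f}_\HS
  \le \norm{\wt\J}_{\wt\HS\to\HS}\norm{\J f}_{\wt\HS}+\delta\norm f_{\HS^1}.
\end{equation*}
Since $\norm f_{\HS^1}^2=\a[f,f]+\norm f_\HS^2$, the term $\delta\norm f_{\HS^1}$ mixes $\norm f_\HS$ and $\a[f,f]^{1/2}$; the next step is therefore to absorb the $\norm f_\HS$-part back into the left-hand side. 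Squaring and using $(a+b)^2\le(1+t)a^2+(1+t^{-1})b^2$ for a parameter $t>0$ to be chosen, one obtains
\begin{equation*}
  \norm f_\HS^2
  \le (1+t)\norm{\wt\J}^2_{\wt\HS\to\HS}\norm{\J f}_{\wt\HS}^2
  +(1+t^{-1})\delta^2\bigl(\a[f,f]+\norm f_\HS^2\bigr),
\end{equation*}
so that $\bigl(1-(1+t^{-1})\delta^2\bigr)\norm f_\HS^2\le(1+t)\norm{\wt\J}^2\norm{\J f}_{\wt\HS}^2+(1+t^{-1})\delta^2\a[f,f]$, valid whenever $(1+t^{-1})\delta^2<1$.

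The remaining, and only slightly delicate, step is to optimize the free parameter $t$ so that the resulting coefficients come out in the clean closed form $\mu=1+\tfrac{4\delta}{2-3\delta}$ and $\nu=\tfrac{\delta}{2-3\delta}$. With $\norm{\wt\J}\le 1+\delta$ one has $\norm{\wt\J}^2\le(1+\delta)^2$, and the hypothesis $\delta<2/3$ is exactly what is needed to keep $2-3\delta>0$ and the denominators positive; a convenient choice turns out to be something like $t=\delta/(1-\delta)$ or $1+t^{-1}=1/(2\delta)$ scaled appropriately, which I would pin down by matching terms. After dividing through by $1-(1+t^{-1})\delta^2$ and simplifying, the coefficient of $\norm{\J f}_{\wt\HS}^2$ becomes $\mu$ and the coefficient of $\a[f,f]$ becomes $\nu$ as claimed. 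The main obstacle is purely bookkeeping: choosing $t$ and simplifying the rational expressions so that the bounds land on precisely the stated values rather than on some merely equivalent (but uglier) constants; there is no conceptual difficulty, since everything reduces to the two defining inequalities of quasi-unitarity plus the operator-norm bound on $\wt\J$.
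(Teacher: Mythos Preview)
Your approach has two genuine gaps, and it is different from the paper's argument.

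\textbf{First, the norm bound $\norm{\wt\J}_{\wt\HS\to\HS}\le 1+\delta$ is not available.} In this paper, ``$\delta$-quasi-unitary'' (Definition~\ref{def:quasi-uni}) means exactly the three conditions~\eqref{eq:quasi-uni:1},~\eqref{eq:quasi-uni:2} and~\eqref{thA:1:3}. Condition~\eqref{thA:1:3} only says $\norm{\wt\J-\J^*}\le\delta$; to turn this into $\norm{\wt\J}\le 1+\delta$ you would need $\norm{\J}\le 1$, which is neither assumed nor derivable from these hypotheses. So the very first inequality $\norm{\wt\J\J f}_\HS\le(1+\delta)\norm{\J f}_{\wt\HS}$ is unjustified. (Your fallback ``or can be taken as part of the standing assumptions'' changes the statement you are asked to prove.)

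\textbf{Second, even granting that bound, your route cannot hit the stated constants.} Your method gives
\[
\mu(t)=\frac{(1+t)(1+\delta)^2}{1-(1+t^{-1})\delta^2},\qquad
\nu(t)=\frac{(1+t^{-1})\delta^2}{1-(1+t^{-1})\delta^2}.
\]
Requiring $\nu(t)\le\dfrac{\delta}{2-3\delta}$ forces $1+t^{-1}\le\dfrac{1}{2\delta(1-\delta)}$; at the extremal $t$ one checks that $\mu(t)>1+\dfrac{4\delta}{2-3\delta}$. For a concrete witness take $\delta=\tfrac12$: the target is $\mu=5$, $\nu=1$, but the constraint forces $t\ge 1$, and at $t=1$ your formula yields $\mu=9$, while larger $t$ only makes $\mu$ worse. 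The loss comes from the triangle inequality followed by squaring, together with the crude factor $(1+\delta)^2$; this is not ``bookkeeping'' but a structural overestimate.

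\textbf{What the paper does instead.} The paper avoids any operator-norm bound on $\J$ or $\wt\J$ by working directly with the difference of \emph{squared} norms and expanding via inner products:
\[
\norm f_\HS^2-\norm{\J f}_{\wt\HS}^2
=(f-\wt\J\J f,f)_\HS+\bigl[(\wt\J\J f,f)_\HS-(\J f,\J f)_{\wt\HS}\bigr].
\]
The first bracket is estimated by~\eqref{eq:quasi-uni:1}; for the second one applies~\eqref{thA:1:3} with $u=\J f$, giving $\abs{(\wt\J\J f,f)_\HS-(\J f,\J f)_{\wt\HS}}\le\delta\norm f_\HS\norm{\J f}_{\wt\HS}$. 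A single application of $ab\le\tfrac12(a^2+b^2)$ to each product then yields
\[
\norm f_\HS^2-\norm{\J f}_{\wt\HS}^2\le \tfrac{3\delta}{2}\norm f_\HS^2+\tfrac{\delta}{2}\,\a[f,f]+\tfrac{\delta}{2}\norm{\J f}_{\wt\HS}^2,
\]
and rearranging (using $\delta<2/3$) produces exactly $\mu=1+\dfrac{4\delta}{2-3\delta}$ and $\nu=\dfrac{\delta}{2-3\delta}$. The key idea you are missing is to use~\eqref{thA:1:3} \emph{pointwise} with $u=\J f$ rather than to pass through an operator-norm bound on $\wt\J$.
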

\begin{proof}
  We have
  \begin{align*}
    \norm f_\HS^2 - \norm{\J f}_{\wt \HS}^2
    & =(f-\wt \J\J f,f)_{\HS}
      +(\wt \J \J f,f)_{\HS}-(\J f,\J f)_{\wt\HS}
    & \text{(using~\eqref{eq:quasi-uni:1} and~\eqref{thA:1:3})}\\
    &\le \delta \norm f_{\HS^1} \norm f_\HS
      +  \delta \norm f_\HS \|\J f\|_{\wt\HS} \\
    &\le \frac \delta 2 \bigl(\norm f_{\HS^1}^2 + \norm f_\HS^2 \bigr)
      +  \frac \delta 2 \left(\norm f_\HS^2+ \|\J f\|_{\wt\HS}^2\right)\\
    & = \frac {3\delta} 2  \norm f_\HS^2 + \frac \delta 2 \a[f,f]
      + \frac \delta 2  \|\J f\|_{\wt\HS}^2,
  \end{align*}
  whence we get the desired estimate
  \begin{equation*}
    \norm f_\HS^2 
    \le \left(1+\frac {4\delta}{2-3\delta}\right) \norm{\J f}_{\wt \HS}^2
      + \frac \delta {2-3\delta} \a[f,f]
  \end{equation*}
  provided $\delta < 2/3$.  The estimate~\eqref{thA:2:4} follows similarly.
\end{proof}

\begin{remark}
\label{rem:why-not-quasi-uni}
In our concrete example (cf.~\eqref{HHAA}), $\J=\J_\eps$ is an isometry (cf.~\eqref{CS}), hence~\eqref{thA:2:2} follows with $\mu=1$ and $\nu=0$.

Note that showing~\eqref{thA:2:4} \emph{directly} (as in Lemma~\ref{lm3}), we obtain $\wt \mu=\mu_\eps=1+C_6 \eps^{2\alpha}$ and $\wt \nu=\nu_\eps = C_{17} \eps^{2\min\{\alpha,\beta\}}$, whereas applying Proposition~\ref{prp:quasi-uni-spec-conv} together with Lemma~\ref{lm4}, we only have $\wt \mu=\mu_\eps=1+\mathcal O(\eps^{\min\{\alpha,\beta\}})$ and $\wt \nu=\nu_\eps = \mathcal O(\eps^{\min\{\alpha,\beta\}})$, hence a worse estimate for the spectral convergence.  
\end{remark}

\section{Proof of the main results\label{sec4}}

\subsection{Preliminaries}
For the proof of Theorems~\ref{th1} and~\ref{th2} we will   use abstract results given in Section~\ref{sec3} (Theorems~\ref{thA:1} and~\ref{thA:2}, respectively). Recall, that these abstract results serve to compare the resolvents and spectra of self-adjoint non-negative operators $\wt\A$ and $\A$ acting in different Hilbert spaces $\wt\HS$ and $\HS$, respectively.
We will apply these abstract theorems for  
\begin{gather}
  \label{HHAA}
  \wt\HS=\HS\e,\quad \HS=\HS_0, \quad \wt\A=\A\e,\quad \A=\A_0 
\end{gather}
(recall that $\HS\e=\L(\Omega\e)$ and $\HS_0=\L(\Omega_0)\oplus\C$, and $\A\e$ and $\A_0$ are the self-adjoint operators acting in these spaces associated with the sesquilinear forms $\a\e$~\eqref{ae} and $\a_0$~\eqref{ae0}).

Similarly to~\eqref{scale1}--\eqref{scale2} we introduce Hilbert spaces $\HS\e^k$ and $\HS_0^k$, $k=1,2$, consisting of functions $u\in\dom(\A\e^{k/2})$ and $f\in\dom(\A_0^{k/2})$, respectively, equipped with the norms
\begin{equation*}
  \|u\|_{\HS\e^k}=\|(\A\e+\Id)^{k/2}u\|_{\HS\e},\quad
  \|f\|_{\HS_0^k}=\|(\A_0+\Id)^{k/2}f\|_{\HS_0}.
\end{equation*} 
Note that for Sobolev spaces we use a sans serif font, e.g.\ $\H^1(\Omega\e)$, $\H^2(\Omega_0)$, etc.

Recall, that the sets $D\e^\pm$ are given in~\eqref{Dpm}.
We introduce several other subsets of $\Omega\e$:
\begin{align*}
 D\e^0\coloneqq \left\{\x=(x_1,x_2)\in S\e:\ 
         x_1=0\right\},\quad
  Y\e\coloneqq \left\{\x=(x_1,x_2)\in S\e:\ 
       |x_1|<\frac\eps2 \right\}.
\end{align*}
Note that $Y\e\subset S\e$ due to~\eqref{eq:def.eps0}.
We also denote
\begin{gather*}
  \Omega_0^\pm\coloneqq \{x\in\Omega_0:\ \pm x>0\},\quad
  \wt\Omega_0\coloneqq \Omega_0\cap \left[-\frac12,\frac12\right],\quad
  \wt\Omega_0^\pm\coloneqq \Omega_0^\pm \cap \wt\Omega_0.
\end{gather*}

By $\la u\ra _{D}$ we denote the mean value of the function $u(\x)$ in the domain $D$, i.e.\
\begin{equation*}
  \la u\ra_{D}=|D|^{-1}\int_{D}u(\x)\d \x,
\end{equation*}
where $\abs D$ denotes the area of $D$.  Also we keep the same notation if $D$ is a segment (for example, $D\e^0$); in this case we integrate with respect to the natural coordinate on this segment, and $|D|$ denotes its length.
\smallskip

In the following,  we need the standard   Sobolev inequality.

\begin{lemma}
\label{lemma:sobolev}
Let $\mathcal{I}$ be a bounded interval.
One has
  \begin{gather}
  \label{Sobolev1}
    \forall f\in \H^1(\mathcal{I})\colon\quad
    \|f\|^2_{\mathsf{L}^\infty(\mathcal{I})}\leq  
    \ell_{\mathcal{I}}\|f\|^2_{\H^1(\mathcal{I})},
  \end{gather}  
  where  the constant $\ell_{\mathcal{I}}>1$ depends only on the length $|\mathcal{I}|$ of $\mathcal{I}$.
\end{lemma}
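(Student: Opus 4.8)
The plan is to reduce the general interval to a fixed reference interval by an affine change of variables and then to apply the fundamental theorem of calculus together with the Cauchy--Schwarz inequality. First I would record the sharp statement on a normalized interval: for $g \in \H^1(0,1)$ one has the estimate $\|g\|_{\mathsf{L}^\infty(0,1)}^2 \le c\,\|g\|_{\H^1(0,1)}^2$ for some universal constant $c>1$. This follows by a standard argument: writing $g(x)-g(y) = \int_y^x g'(t)\d t$, integrating over $y\in(0,1)$, and using Cauchy--Schwarz one gets $|g(x)| \le \int_0^1 |g(t)|\d t + \int_0^1 |g'(t)|\d t \le \|g\|_{\mathsf{L}^2(0,1)} + \|g'\|_{\mathsf{L}^2(0,1)} \le \sqrt{2}\,\|g\|_{\H^1(0,1)}$, so $c=2$ works on the unit interval.

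Next I would transfer this to an arbitrary bounded interval $\mathcal{I} = (a,b)$ with $|\mathcal{I}| = b-a =: L$ via the substitution $f(x) = g\bigl((x-a)/L\bigr)$, so that $g(s) = f(a+Ls)$ with $s\in(0,1)$. One has $\|f\|_{\mathsf{L}^\infty(\mathcal{I})} = \|g\|_{\mathsf{L}^\infty(0,1)}$, while the change of variables gives $\|g\|_{\mathsf{L}^2(0,1)}^2 = L^{-1}\|f\|_{\mathsf{L}^2(\mathcal{I})}^2$ and $\|g'\|_{\mathsf{L}^2(0,1)}^2 = L\|f'\|_{\mathsf{L}^2(\mathcal{I})}^2$. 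Combining with the unit-interval bound yields
\begin{equation*}
  \|f\|_{\mathsf{L}^\infty(\mathcal{I})}^2 \le 2\bigl(L^{-1}\|f\|_{\mathsf{L}^2(\mathcal{I})}^2 + L\|f'\|_{\mathsf{L}^2(\mathcal{I})}^2\bigr) \le \ell_{\mathcal{I}}\,\|f\|_{\H^1(\mathcal{I})}^2,
\end{equation*}
with $\ell_{\mathcal{I}} = 2\max\{L,L^{-1}\}$, which depends only on $L=|\mathcal{I}|$ and exceeds $1$. This establishes~\eqref{Sobolev1}.

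There is no serious obstacle here; the only point requiring a little care is tracking how the two norm contributions scale oppositely under dilation, which is exactly what forces the factor $\max\{L,L^{-1}\}$ (equivalently, why one cannot hope for a constant independent of $|\mathcal{I}|$ when $|\mathcal{I}|$ is either very small or very large). If one prefers to avoid even the reference-interval computation, an alternative is to argue directly on $\mathcal{I}$: pick $x_0\in\mathcal{I}$ with $|f(x_0)|^2 \le L^{-1}\|f\|_{\mathsf{L}^2(\mathcal{I})}^2$ (such a point exists by the mean value property of the integral), and then for any $x\in\mathcal{I}$ write $f(x)^2 = f(x_0)^2 + \int_{x_0}^x (f^2)'(t)\d t = f(x_0)^2 + 2\int_{x_0}^x f(t)f'(t)\d t$, bounding the last integral by $2\|f\|_{\mathsf{L}^2(\mathcal{I})}\|f'\|_{\mathsf{L}^2(\mathcal{I})} \le \|f\|_{\mathsf{L}^2(\mathcal{I})}^2 + \|f'\|_{\mathsf{L}^2(\mathcal{I})}^2$; this gives the same conclusion with $\ell_{\mathcal{I}} = L^{-1}+1$ when $L\le 1$ and a comparable constant in general.
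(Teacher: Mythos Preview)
Your proof is correct. The paper itself does not prove this lemma; it simply records it as ``the standard Sobolev inequality'' and in a remark quotes the sharp constant $\ell_{\mathcal I}=\coth(|\mathcal I|/2)$ from~\cite{P16} without argument. Your elementary approach via the fundamental theorem of calculus and scaling yields the explicit (but non-sharp) constant $\ell_{\mathcal I}=2\max\{|\mathcal I|,|\mathcal I|^{-1}\}$, which is entirely sufficient for every use of the lemma in the paper (only that $\ell_{\wt\Omega_0}$, $\ell_{\wt\Omega_0^\pm}$ are finite and depend only on the interval length is ever needed). Your alternative argument at the end in fact gives $\ell_{\mathcal I}=|\mathcal I|^{-1}+1$ for \emph{all} $L>0$, not only $L\le 1$, since $(L^{-1}+1)\|f\|_{\L}^2+\|f'\|_{\L}^2\le (L^{-1}+1)\|f\|_{\H^1}^2$ regardless; this is a slightly better constant for large intervals than your first one.
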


\begin{remark}
One can prove, using arguments as in \cite[Sec.~6.1]{P16}, that \eqref{Sobolev1} holds with $\ell_{\mathcal{I}}=\coth(\ell/2)$. 
\end{remark}
 
The following auxiliary estimates will be used further in the proof of Theorems~\ref{th1}--\ref{th2}.
Similar estimates can be found in~\cite[Lemmata~3.1,\,3.3]{CK15} and~\cite[Lemmata~3.1,\,5.2]{CK17}.

\begin{lemma}\label{lm:aux}
  For any $u\in\H^1(\Omega\e)$ one has
  \begin{gather}
    \label{lm:aux:2}
    \abs{\la u \ra_{D^+\e}-\la u \ra_{R\e}}
    \leq C_3\abs{\ln \eps}^{1/2}\|\nabla u\|_{\L(R\e)},\\
    \label{lm:aux:2'}
    |\la u \ra_{D^-\e}-\la u \ra_{Y\e}|
    \leq C_4 \abs{\ln \eps}^{1/2}\|\nabla u\|_{\L(Y\e)},\\
    \label{lm:aux:1}
    \abs{\la u \ra_{D^-\e}-\la u \ra_{D\e^0}}
    \leq C_5 \abs{\ln \eps}^{1/2}\|\nabla u\|_{\L(Y\e)},\\
    \label{lm:aux:3}
    \|u\|^2_{\L(P\e)}
    \leq C_6\eps^{2\al}\Bigl(\|u\|^2_{\L(S\e)} + \norm{\nabla u}_{\L(S\e)}^2 
      + \|\nabla u\|_{\L(P\e)}^2\Bigr).
  \end{gather}
\end{lemma}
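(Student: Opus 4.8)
The plan is to prove the four inequalities in Lemma~\ref{lm:aux} by a sequence of essentially elementary one- and two-dimensional estimates, each of which reduces to a Poincar\'e--Sobolev-type bound on a rectangle, adapted to the shrinking geometry. The recurring device is that for a rectangle $Q=(a,a+w)\times(c,c+\ell)$ and $u\in\H^1(Q)$ one controls the difference of the means of $u$ on two parallel edges (or on an edge and on all of $Q$) by the $\L$-norm of $\nabla u$ on $Q$ times a constant depending only on the aspect ratio $w/\ell$; when one dimension is much smaller than the other the constant stays bounded, and the logarithmic factor $\abs{\ln\eps}^{1/2}$ enters precisely when one passes through a long thin channel whose length-to-width ratio is a power of $\eps$, so that a chain of dyadic rectangles of geometrically increasing width is needed and the number of links is $O(\abs{\ln\eps})$. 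This telescoping-of-dyadic-rectangles argument is the technical heart; it is exactly the mechanism behind the analogous lemmas in~\cite{CK15,CK17} cited in the statement.

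First I would establish~\eqref{lm:aux:2}. Both $D^+\e$ and $R\e$ sit inside the room $R\e$ (the square of side $b\e=\eps^\be$), with $D^+\e$ being its bottom edge. Comparing the mean of $u$ over the bottom edge of a square to its mean over the whole square is a scale-invariant estimate: by scaling $R\e$ to the unit square one gets $\abs{\la u\ra_{D^+\e}-\la u\ra_{R\e}}\le C\|\nabla u\|_{\L(R\e)}$ with an absolute constant, \emph{without} any logarithm. Then I would note that the stated bound with the extra $\abs{\ln\eps}^{1/2}$ is weaker, so it certainly holds; alternatively, if the intended route genuinely needs the log (because one actually compares means along a long thin strip inside $R\e$ of width $d\e\ll b\e$ — recall $d\e\le b\e$ by~\eqref{eps:small:2}), one covers that strip by $O(\abs{\ln(b\e/d\e)})=O(\abs{\ln\eps})$ dyadic rectangles of comparable aspect ratio and sums the individual $\L(\nabla u)$ contributions via Cauchy--Schwarz in the number of links, producing the $\abs{\ln\eps}^{1/2}$. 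The same scheme, run inside $Y\e$ (a square of side $\eps$) with the thin vertical channel above the passage mouth of width $d\e=\ga\eps^{\al+1}\ll\eps$, gives~\eqref{lm:aux:2'} and~\eqref{lm:aux:1}: in~\eqref{lm:aux:2'} we compare the mean of $u$ on the top edge $D^-\e$ of $Y\e$ (or rather on the sub-segment $|x_1|<d\e/2$) with the mean over all of $Y\e$, and in~\eqref{lm:aux:1} we compare the mean on $D^-\e$ with the mean on the central vertical cross-section $D\e^0$; both are handled by joining the relevant pieces through a fan of dyadic rectangles of widths $d\e, 2d\e, 4d\e,\dots,\eps$ inside $Y\e$, and the bound on $\eps_0$ in~\eqref{eq:def.eps0} guaranteeing $|\ln\ga|\le|\ln\eps|$ is what lets us absorb the $\ga$-dependence of $d\e$ into the $\abs{\ln\eps}$ factor.

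Finally~\eqref{lm:aux:3} is an $\L$-bound on $u$ over the whole passage $P\e=\{|x_1|<d\e/2\}\times(0,h\e)$ rather than a difference of means. Here I would write, for a.e.\ $x_1\in(-d\e/2,d\e/2)$ and $x_2\in(0,h\e)$, $u(x_1,x_2)=u(x_1,0)+\int_0^{x_2}\partial_{x_2}u(x_1,t)\d t$, square, integrate over $P\e$, and use $(a+b)^2\le 2a^2+2b^2$: the second term contributes $\le 2h\e^2\|\partial_{x_2}u\|_{\L(P\e)}^2\le 2\eps^{2\al}\|\nabla u\|_{\L(P\e)}^2$, and the first term becomes $d\e h\e\int_{-d\e/2}^{d\e/2}|u(x_1,0)|^2\d x_1$, i.e.\ (up to $h\e\le\eps^{\al}$, using also $h\e\le\eps$) essentially $\eps^{2\al}$ times the squared $\L$-norm of the trace of $u$ on $D^-\e$; this trace norm is then bounded via a trace/Poincar\'e inequality on $Y\e$ (after scaling, $\|u\|_{\L(D^-\e)}^2\le C(\|u\|_{\L(Y\e)}^2+\|\nabla u\|_{\L(Y\e)}^2)$, the $\eps$-powers matching because $|Y\e|=\eps^2$ and $|D^-\e|\le\eps$), giving the right-hand side of~\eqref{lm:aux:3}. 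The main obstacle throughout is bookkeeping: tracking all the $\eps$-powers through the scalings so that the transversal measure factors cancel correctly and verifying that every dyadic chain has a uniformly bounded number of links up to the explicit $\abs{\ln\eps}$ — a point where the constraints collected in~\eqref{eq:def.eps0} (in particular $d\e\le\eps/2$, $d\e\le b\e$, $[-\eps,\eps]\subset\Omega_0$, and $|\ln\ga|\le|\ln\eps|$) are all used.
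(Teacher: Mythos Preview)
Your dyadic-chain sketch for \eqref{lm:aux:2}--\eqref{lm:aux:1} is essentially the content of the lemmata from~\cite{CK17} that the paper simply cites, so once the citation is unwrapped your plan and the paper's coincide. One slip to fix: $D_\eps^+$ is \emph{not} the full bottom edge of $R_\eps$ but only the sub-segment of length $d_\eps$ (the top of the passage), so your ``scale-invariant, no logarithm needed'' remark is wrong; the log is genuinely present, and your ``alternative'' dyadic route (widening from $d_\eps$ to $b_\eps$) is the actual one.

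The real gap is in your treatment of \eqref{lm:aux:3}. First, after squaring the FTC identity and integrating over $P_\eps$, the trace term is
\[
2h_\eps\int_{-d_\eps/2}^{d_\eps/2}|u(x_1,0)|^2\d x_1 = 2h_\eps\|u\|^2_{\L(D_\eps^-)},
\]
with no extra factor $d_\eps$. Second, your claimed trace bound $\|u\|^2_{\L(D_\eps^-)}\le C\bigl(\|u\|^2_{\L(Y_\eps)}+\|\nabla u\|^2_{\L(Y_\eps)}\bigr)$ is false as stated: rescaling the $\eps$-square $Y_\eps$ to the unit square produces $\|u\|^2_{\L(D_\eps^-)}\le C\bigl(\eps^{-1}\|u\|^2_{\L(Y_\eps)}+\eps\|\nabla u\|^2_{\L(Y_\eps)}\bigr)$. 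With the correct factors the gradient contribution to $2h_\eps\|u\|^2_{\L(D_\eps^-)}$ is of order $\eps^{\alpha+1}\|\nabla u\|^2_{\L(Y_\eps)}$, which is \emph{not} dominated by $C\eps^{2\alpha}\|\nabla u\|^2_{\L(S_\eps)}$ when $\alpha>1$. To recover the full $\eps^{2\alpha}$ one must exploit the smallness of $d_\eps$ also in the gradient part of the trace estimate --- exactly the factor $d_\eps|\ln d_\eps|\,\|\nabla u\|^2_{\L(Y_\eps)}$ that the cited \cite{CK17} bound (the paper's \eqref{aux:3}) delivers via the same dyadic mechanism you used for the first three inequalities. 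The paper then combines \eqref{aux:3} with a one-dimensional Sobolev estimate in $x_1$ (its \eqref{aux:5}, $\|u\|^2_{\L(Y_\eps)}\le \ell_{\wt\Omega_0}\eps\|u\|^2_{\H^1(S_\eps)}$) to pass from $Y_\eps$ to $S_\eps$, which is what closes the $\eps^{2\alpha}$ bookkeeping.
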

 
\begin{proof}
The following estimates were established in~\cite[Ineqs.~(3.14),\,(3.13),\,(5.16)]{CK17}: 
\begin{gather}
    \label{aux:2}
    |\la u \ra_{D^+\e}-\la u \ra_{R\e}|
    \leq C_7 \abs{\ln d\e}^{1/2}\|\nabla u\|_{\L(R\e)},\\
    \label{aux:1}
    |\la u \ra_{D^-\e}-\la u \ra_{Y\e}|
    \leq C_8 \abs{\ln d\e}^{1/2}\|\nabla u\|_{\L(Y\e)},\\
    \label{aux:3}
    \|u\|^2_{\L(P\e)}\leq 
    C_9 h\e\left(d\e\eps^{-2}\|u\|^2_{\L(Y\e)}
      +d\e \abs{\ln d\e}\norm{\nabla u}_{\L(Y\e)}^2\right)+ 2(h\e)^2 \|\nabla u\|_{\L(P\e)}^2
\end{gather}
for any $u\in\H^1(\Omega\e)$.
Note that~\cite{CK17} deals with ``rooms'' and ``passages'' of a different size than in the current work, however, the estimates~\eqref{aux:2}--\eqref{aux:3} were established for arbitrary $\eps$,~$d\e$, $h\e$, $b\e$
(such that $d\e<\eps\leq\eps_0<1$, $d\e<b\e\le 1$), and the constants $C_7$, $C_8$ and $C_9$ depend only on $\eps_0$.
 
Due to $\eps\leq \min\{\ga,\ga^{-1}\}$ (cf.~\eqref{eq:def.eps0}), one has  
$|\ln\gamma|\leq|\ln\eps|$. Using this and taking into account that $d\e=\gamma\eps^{\al+1}$ with $\al>0$,
we deduce from~\eqref{aux:2} the required estimate~\eqref{lm:aux:2} with $C_3=(\alpha+2)^{1/2}C_7$.  Similarly,~\eqref{lm:aux:2'} holds with $C_4=(\alpha+2)^{1/2}C_8$.

Before proving the estimate~\eqref{lm:aux:1} we need an additional step.  Let $u\in C^\infty(\overline{Y\e})$. One has:
\begin{gather*}
u(x_1,x_2)=u(0,x_2)+\int_0^{x_1}\partial_1 u(\tau,x_2) \d\tau,
\end{gather*}
where $x_1\in \left(-\frac \eps2, \frac\eps2\right)$,
$x_2\in (-\eps,0)$, and $\partial_1 u$ denotes the partial derivative of $u$ with respect to the first variable.  Integrating the above equality with respect to $x_1$ over $(-\frac\eps2,\frac\eps2)$, with respect to $x_2$ over $(-\eps,0)$, and then dividing by $\eps^2$, we get
\begin{gather*}
  \la u \ra_{Y\e}=
  \la u \ra_{D\e^0}+
  \eps^{-2}\int_{-\eps}^0\int_{-\eps/2}^{\eps/2}\int_0^{x_1}
  \partial_1 u(\tau, x_2) \d\tau\d x_1\d x_2,
\end{gather*}
whence, using the Cauchy-Schwarz inequality, we deduce the estimate
\begin{gather}\label{aux:4} 
  |\la u \ra_{Y\e}-
  \la u \ra_{D\e^0}|\leq \frac12
  \|\nabla u\|_{\L(Y\e)}.
\end{gather}
By standard density arguments~\eqref{aux:4} holds not only for smooth functions, but also for any $u\in\H^1(Y\e)$.
The  estimate~\eqref{lm:aux:1} follows from~\eqref{lm:aux:2'} and~\eqref{aux:4} with $ C_5=C_4+\frac12\abs{\ln\eps_0}^{-1/2}$.

It remains to prove the estimates~\eqref{lm:aux:3}. 
Let $u\in C^\infty(\overline{S\e})$. 
By Lemma~\ref{lemma:sobolev} one has 
\begin{gather*}
|u(x_1,x_2)|^2
\leq \ell_{\wt\Omega_0}
\|u(\cdot,x_2)\|^2_{\L(\wt\Omega_0)},
\end{gather*}
where $x_1\in \wt\Omega_0$, $x_2\in (-\eps,0)$.
Integrating the above inequality with respect to $x_1$ over $(-\frac\eps2,\frac\eps2)$, and with respect to  $x_2$ over $(-\eps,0)$,  we obtain
\begin{gather}
\label{aux:5}
    \|u\|^2_{\L(Y\e)}
    \leq \ell_{\wt\Omega_0}\eps \| u\|^2_{\H^1(\wt\Omega_0\times (-\eps,0))}
    \leq \ell_{\wt\Omega_0}\eps \| u\|^2_{\H^1(S\e)};
\end{gather}
by  density arguments the estimate \eqref{aux:5} holds for any $u\in\H^1(S\e)$.
Combining~\eqref{aux:3},~\eqref{aux:5} and using~\eqref{dhb}, we arrive at the desired estimate~\eqref{lm:aux:3} with  
\begin{equation}
  \label{C6}
  C_6 \coloneqq  
  \max\left\{C_9\gamma(\ell_{\wt\Omega_0}+ \al+2);\,2\right\}.
\qedhere
\end{equation}
\end{proof}

\subsection{Proof of Theorem~\ref{th1}}

In order to utilise Theorem~\ref{thA:1} we need to construct suitable
operators
\begin{equation*}
  \J\e^1\colon\HS_0^1\to \HS\e^1,\qquad 
  \wt\J\e^1\colon\HS\e^1\to \HS_0^1,
\end{equation*}
 where $\HS^1_0$ and $\HS^1\e$ are the  energy spaces  associated with the forms $\a_0$ and $\a\e$, i.e.\
\begin{gather*}
  \begin{array}{lll}
    \HS^1_0=\dom(\a_0)&\text{equipped with the norm}
    &\|f\|_{\HS^1_0}=(\a_0[f,f]+\|f\|_{\HS_0}^2)^{1/2},
    \\[1mm]
    \HS\e^1=\dom(\a\e)&\text{equipped with the norm}
    &\|u\|_{ \HS\e^1}=( \a\e[u,u]+\|u\|_{\HS\e}^2)^{1/2}.
  \end{array}
\end{gather*} 

We define the operator $\J\e^1$ as follows. Let
$f=(f_1,f_2)\in \dom(\a_0)= \H^1(\Omega_0)\times\C$, we set
\begin{gather}
\label{J1}
(\J\e^1 f)(\x)= 
\begin{cases}
  \eps^{-1/2}f_1(\Phi\e(x_1)),&\x=(x_1,x_2)\in S\e,\\[1ex]
  \eps^{-1/2}f_1(0) + \dfrac1{h\e} 
  \left( (b\e)^{-1}f_2 -  \eps^{-1/2}f_1(0) \right)x_2,&
  \x=(x_1,x_2)\in  P\e,\\[1ex]
  (b\e)^{-1}f_2,&\x\in R\e,
\end{cases}
\end{gather}
where $\Phi\e \colon\R\to \R$ is a continuous and piecewise linear function
given by
\begin{gather*}
  \Phi\e(x)
  =\begin{cases}
    x,& |x| \ge \dfrac \eps2,\\[1ex]
    \dfrac{2x+d\e}  {2(\eps-d\e)}\eps,& -\dfrac\eps2<x<-\dfrac{d\e}2,\\[2ex]
    \dfrac {2x-d\e}{2(\eps-d\e)}\eps,& \dfrac{d\e}2<x<\dfrac\eps2  ,\\[1ex]
      0,& |x|\leq \dfrac{d\e}2.
    \end{cases}
\end{gather*}

The operator $\wt\J\e^1$ is introduced as a restriction of $\wt\J\e$ onto $\dom(\a\e)$:
\begin{gather}
  \label{J1'}
  \wt\J\e^1 = \wt\J\e\restriction_{\dom(\a\e)}.
\end{gather}
It is easy to see that~\eqref{J1'}  correctly defines a linear operator from
$\H^1(\Omega\e)$ to $\H^1(\Omega_0)\times\C$; here we use the fact that for $u\in \H^1(S\e)$   the function
$v(x_1)\coloneqq \int_{-\eps}^0 u(x_1,x_2)\d x_2$ belongs to $\H^1(\Omega_0)$, namely
\begin{gather}
  \label{H1}
  v'(x_1)=\int_{-\eps}^0 \partial_1u(x_1,x_2)\d x_2,\
  \|v\| _{\L(\Omega_0)}\leq \eps^{1/2}\|  u\|_{\L(S\e)},\
  \|v'\| _{\L(\Omega_0)}\leq \eps^{1/2}\|\partial_1 u\|_{\L(S\e)}.
\end{gather}

\begin{lemma}
\label{lm1}
One has  
\begin{gather}\label{lm1:est}
  \forall f\in\HS^1_0:\quad
  \|\J\e f-\J\e^1 f\|_{\HS\e}
  \leq C_{10}\eps^{\min\{1, \al\}} \|f\|_{\HS_0^1}.
\end{gather}
\end{lemma}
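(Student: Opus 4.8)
The plan is to estimate $\|\J\e f - \J\e^1 f\|_{\HS\e}^2$ piecewise over the three parts $S\e$, $P\e$, $R\e$ of $\Omega\e$, noting that on $R\e$ the two operators agree (both equal $(b\e)^{-1}f_2$), so that term vanishes. First I would handle the strip $S\e$: here $\J\e f = \eps^{-1/2}f_1(x_1)$ while $\J\e^1 f = \eps^{-1/2}f_1(\Phi\e(x_1))$, and $\Phi\e(x_1) = x_1$ outside $[-\eps/2,\eps/2]$, so the difference is supported on $Y\e = \{|x_1|<\eps/2\}\times(-\eps,0)$. On this set I would write $f_1(x_1) - f_1(\Phi\e(x_1)) = \int_{\Phi\e(x_1)}^{x_1} f_1'(\tau)\d\tau$ and use Cauchy--Schwarz: the inner integral is over an interval of length at most $\eps$, so $|f_1(x_1)-f_1(\Phi\e(x_1))|^2 \le \eps \|f_1'\|^2_{\L((-\eps/2,\eps/2))}$; integrating $x_1$ over $(-\eps/2,\eps/2)$ and $x_2$ over $(-\eps,0)$ and multiplying by $\eps^{-1}$ gives a bound $\le \eps^2 \|f_1'\|^2_{\L(\Omega_0)} \le \eps^2 \|f\|^2_{\HS_0^1}$ (up to a constant), which is $O(\eps^{\min\{1,\al\}})$ since $\eps^2 \le \eps$.

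Next I would estimate the passage contribution $\|\J\e f - \J\e^1 f\|^2_{\L(P\e)}$. On $P\e$ one has $\J\e f = 0$, so I just need $\|\J\e^1 f\|^2_{\L(P\e)}$, where $\J\e^1 f$ is the affine interpolation in $x_2$ between $\eps^{-1/2}f_1(0)$ at $x_2=0$ and $(b\e)^{-1}f_2$ at $x_2=h\e$. Since $P\e$ has width $d\e = \gamma\eps^{\al+1}$ and height $h\e = \eps^\al$, and the interpolant is bounded by $\max\{\eps^{-1/2}|f_1(0)|, (b\e)^{-1}|f_2|\}$ in modulus, its $\L^2$-norm squared over $P\e$ is at most $d\e h\e (\eps^{-1}|f_1(0)|^2 + (b\e)^{-2}|f_2|^2) = \gamma\eps^{2\al+1}(\eps^{-1}|f_1(0)|^2 + \eps^{-2\be}|f_2|^2)$. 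Using the Sobolev inequality (Lemma~\ref{lemma:sobolev}) to bound $|f_1(0)|^2 \le \ell_{\wt\Omega_0}\|f_1\|^2_{\H^1(\wt\Omega_0)} \le \ell_{\wt\Omega_0}\|f\|^2_{\HS_0^1}$, and $|f_2|^2 \le \|f\|^2_{\HS_0^1}$, the first term contributes $O(\eps^{2\al})$ and the second $O(\eps^{2\al+1-2\be})$; since $\be < 1/2$ gives $1 - 2\be > 0$, both are $O(\eps^{2\al}) = O(\eps^{2\min\{1,\al\}})$ (and in fact dominated by $\eps^{2\al}$), so the square root is $O(\eps^{\min\{1,\al\}})$ — I should double-check the exponent bookkeeping to confirm it fits under $\min\{1,\al\}$, but since $\al > 0$ and the strip term already forces the exponent $\le 1$, combining gives the claimed $\eps^{\min\{1,\al\}}$.

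Finally I would combine the three pieces: $R\e$ contributes nothing, $S\e$ contributes $\le C\eps^2\|f\|^2_{\HS_0^1}$, and $P\e$ contributes $\le C\eps^{2\al}\|f\|^2_{\HS_0^1}$ (after absorbing the $\eps^{2\al+1-2\be}$ term, which is smaller). Taking square roots and using $\eps \le 1$ yields $\|\J\e f - \J\e^1 f\|_{\HS\e} \le C_{10}\eps^{\min\{1,\al\}}\|f\|_{\HS_0^1}$ with an explicit constant $C_{10}$ depending on $\gamma$ and $\ell_{\wt\Omega_0}$. The main obstacle, such as it is, is keeping the exponent arithmetic in the passage term honest — in particular verifying that the $\eps^{-2\be}$ blow-up of the room scaling is more than compensated by the $\eps^{2\al+1}$ volume factor, which is exactly where the hypothesis $\be < 1/2$ enters; everything else is routine Cauchy--Schwarz and the one-dimensional Sobolev embedding.
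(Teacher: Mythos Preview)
Your proof is correct and follows essentially the same decomposition and structure as the paper's proof: the contribution on $R\e$ vanishes, the passage term is handled via the pointwise bound on the affine interpolant together with the Sobolev embedding (Lemma~\ref{lemma:sobolev}) and the observation $\eps(b\e)^{-2}=\eps^{1-2\beta}\le 1$, and the strip term is reduced to a one-dimensional estimate on $(-\eps/2,\eps/2)$. The only technical difference is in this last step: the paper notes that $f_1-f_1\circ\Phi\e\in\H_0^1(-\eps/2,\eps/2)$ and applies the Dirichlet--Poincar\'e inequality followed by the chain rule and the bound $1\le\Phi\e'\le 2$, whereas you use the fundamental theorem of calculus and Cauchy--Schwarz directly; both yield the same $O(\eps^2)$ bound.
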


\begin{proof}
  It is easy to see  (cf.~\eqref{J},~\eqref{J1}) that
  \begin{gather}\label{terms}
    \|\J\e f-\J\e^1 f\|_{\HS\e}^2=
    \|\J\e f-\J\e^1 f\|_{\L(Y\e)}^2+
    \|\J\e^1 f\|_{\L(P\e)}^2.
  \end{gather}

  To estimate the first term in~\eqref{terms} we need the following standard Poincar\'e  inequality following from the variational characterisation of the first Dirichlet eigenvalue:   
  \begin{gather}
    \label{Poincare}
    \forall f\in \H_0^1(\mathcal{I})\colon\quad
    \|f\|^2_{\L(\mathcal{I})}\leq \pi^{-2} |\mathcal{I}|^2 \|f'\|^2_{\L(\mathcal{I})},
  \end{gather} 
  where
  $\mathcal{I}$ is a bounded interval and  $|\mathcal{I}|$ denotes its length.
  Using~\eqref{Poincare} and the fact that the function $f-f\circ \Phi\e$ belongs to $\H_0^1(-\frac\eps2,\frac\eps2)$    we get
  \begin{gather}
    \label{JJ}
    \|\J\e f-\J\e^1 f\|_{\L(Y\e)}^2=
    \|f_1-f_1\circ \Phi\e\|^2_{\L(-\frac\eps2,\frac\eps2)}
    \leq
    \pi^{-2} 
    \eps^2\|(f_1-f_1\circ \Phi\e)'\|^2_{\L(-\frac\eps2,\frac\eps2)}.
  \end{gather}
  Taking into account that $2d\e\leq\eps$ (see~\eqref{eps:small:1}), we get
  \begin{equation}\label{g:prop}
    1\leq \Phi'\e(x) \le 2\text{\quad as\quad }
    |x|\in \left(\frac{d\e}2,\frac\eps2\right).
  \end{equation}
  Using~\eqref{g:prop}, the fact  that $\Phi\e(x)=0 $ as $|x|\leq \frac{d\e}2$ and the chain rule, we can further estimate~\eqref{JJ} as
  \begin{align}\notag
    \|\J\e f-\J\e^1 f\|_{\L(Y\e)}^2 
    &\leq 
      2\pi^{-2} 
      \eps^2\left(\|f_1'\|^2_{\L(-\frac\eps2,\frac\eps2)}+
      \|(f_1'\circ \Phi\e) \cdot 
      \Phi\e'\|^2_{\L((-\frac\eps2,\frac\eps2)\setminus [-\frac{d\e}2,\frac{d\e}2])}\right)\\
    \notag
    &\leq
      2\pi^{-2} \eps^2
      \left(\|f_1'\|^2_{\L(-\frac\eps2,\frac\eps2)} + 
      4
      \|f_1'\|^2_{\L(-\frac\eps2,\frac\eps2)}\right)\\\label{termY}
    &\leq
      10\pi^{-2}  \eps^2 \|f_1'\|^2_{\L(-\frac\eps2,\frac\eps2)}
      \leq 10\pi^{-2}\eps^2 \|f\|^2_{\HS_0^1}.
  \end{align}
  
  Finally, we estimate the second term in~\eqref{terms}.   
  Recall that $\wt\Omega_0=\Omega_0\cap \left[-\frac12,\frac12\right]$.
 Using \eqref{J1}, \eqref{dhb}  and Lemma~\ref{lemma:sobolev}, we conclude
  \begin{align}
    \nonumber
    \|\J\e^1 f\|_{\L(P\e)}^2
    &\leq
      \max\left\{\eps^{-1}|f_1(0)|^2,\,(b\e)^{-2}|f_2|^2\right\}h\e d\e\\
    \nonumber
    &\leq
      \max\left\{ \eps^{-1}\ell_{\wt\Omega_0}\|f\|^2_{\H^1(\wt\Omega_0)},\,
      (b\e)^{-2}|f_2|^2\right\}h\e d\e
    \\\nonumber
    &\leq \eps^{-1}h\e d\e\max\left\{\ell_{\wt\Omega_0},
      \eps (b\e)^{-2}\right\}\|f\|^2_{\HS^1_0}\\
    \label{termP}
    &\leq  \gamma \eps^{2\al} \ell_{\wt\Omega_0} \|f\|^2_{\HS^1_0}.
  \end{align}
  In the last estimate we use $\beta<1/2$ as then
  $\eps(b_\eps)^{-2} = \eps^{1-2\beta} \le 1< \ell_{\wt\Omega_0}$.
  Combining~\eqref{terms},~\eqref{termY} and~\eqref{termP}, we arrive
  at the desired estimate~\eqref{lm1:est} with the constant
  explicitly given by $C_{10}=\sqrt{10\pi^{-2} + \gamma \ell_{\wt\Omega_0} }$.
\end{proof}

Now we come to the key lemma of this work; the following estimate on the two forms associated with $\A\e$ and $\A_0$:
\begin{lemma}\label{lm2}
  One has    
  \begin{gather}\label{lm2:est}
    \forall f\in \HS_0^2,\,u\in \HS^2\e\colon\quad
    \left|\a\e[\J^1\e f,u]-\a_0[f, \wt\J^{1}\e u] \right|\leq 
    C_{11}\eps^{\min \{\al, 1/2-\be\}}\|f\|_{\HS_0^2}\|u\|_{\HS^1\e}.
  \end{gather} 
\end{lemma}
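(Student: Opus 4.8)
The plan is to expand both sesquilinear forms explicitly, decompose $\Omega\e$ into its three pieces $S\e$, $P\e$, $R\e$, and identify a leading-order cancellation between $\a\e[\J^1\e f,u]$ and $\a_0[f,\wt\J^1\e u]$. On the strip $S\e$ the map $\J^1\e$ inserts $\eps^{-1/2}f_1\circ\Phi\e$, while $\wt\J^1\e$ averages $u$ over the vertical fibre; the gradient pairing $\int_{S\e}\nabla(\J^1\e f)\cdot\overline{\nabla u}$ only sees $\partial_1$, and away from $Y\e$ we have $\Phi\e=\mathrm{id}$, so there the integrand is literally $\eps^{-1/2}f_1'(x_1)\overline{\partial_1 u}$, which integrates in $x_2$ to reproduce $\int_{\Omega_0\setminus\wt\Omega_0}f_1'\overline{(\wt\J^1\e u)_1'}$ exactly (using \eqref{H1}); the potential terms match similarly using the definition \eqref{V} of $V\e$. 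So the entire discrepancy is localised to $Y\e\cup P\e\cup R\e$ plus the single point term $\gamma f_1(0)\overline{(\wt\J^1\e u)_1(0)}$ appearing in $\a_0$.

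Next I would treat the localised contributions. First, on $Y\e$: the extra factor $\Phi\e'$ (bounded by \eqref{g:prop}) and the replacement of $\partial_1 u$ by its average introduce errors controlled by $\|\nabla u\|_{\L(Y\e)}$ and $\|f_1'\|_{\L(-\eps/2,\eps/2)}$; since $\|f_1'\|_{\L(-\eps/2,\eps/2)}\le\eps^{1/2}\|f_1'\|_{\mathsf L^\infty}\le C\eps^{1/2}\|f\|_{\HS_0^2}$ by Lemma~\ref{lemma:sobolev}, this costs an $\eps^{1/2}$, and the remaining factor $\|\nabla u\|_{\L(Y\e)}\le\|u\|_{\HS^1\e}$ is harmless. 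Second, on $P\e$: here $\J^1\e f$ is the affine interpolation between $\eps^{-1/2}f_1(0)$ and $(b\e)^{-1}f_2$, so $\nabla(\J^1\e f)$ has only an $x_2$-component equal to $h\e^{-1}\bigl((b\e)^{-1}f_2-\eps^{-1/2}f_1(0)\bigr)$; the pairing against $\partial_2 u$ over $P\e$ has magnitude at most $h\e^{-1}|(b\e)^{-1}f_2-\eps^{-1/2}f_1(0)|\cdot|d\e|^{1/2}h\e^{1/2}\cdot\|\nabla u\|_{\L(P\e)}$, and inserting $d\e=\gamma\eps^{\al+1}$, $h\e=\eps^\al$, $b\e=\eps^\be$ gives the power $\eps^{1/2-\be}$ after using $\|f\|_{\HS_0^2}$ and $|f_2|\le\|f\|_{\HS_0}$; the potential term vanishes on $P\e$ by \eqref{V}. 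Third, on $R\e$: $\J^1\e f$ is constant there, so $\nabla(\J^1\e f)=0$ and the potential vanishes, contributing nothing to $\a\e[\J^1\e f,u]$.

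The remaining task is to reconcile the bookkeeping at $x_1=0$: the form $\a_0$ contributes $\int_{\wt\Omega_0}f_1'\overline{(\wt\J^1\e u)_1'}+\int_{\wt\Omega_0}V_0 f_1\overline{(\wt\J^1\e u)_1}+\gamma f_1(0)\overline{(\wt\J^1\e u)_1(0)}$, and I must show this is matched, up to the claimed error, by what survives of $\a\e[\J^1\e f,u]$ on $Y\e$ together with the pieces produced by the $P\e$ and $R\e$ integrals after integration by parts / rearrangement. The point is that $(\wt\J^1\e u)_1(0)=\eps^{-1/2}\int_{-\eps}^0 u(0,x_2)\d x_2=\eps^{1/2}\la u\ra_{D\e^0}$, and the factor $\gamma$ should emerge precisely as $d\e/(h\e\eps)$ from the vertical gradient on the thin passage; this is where Lemma~\ref{lm:aux} enters, via the estimates \eqref{lm:aux:2}, \eqref{lm:aux:2'}, \eqref{lm:aux:1}, \eqref{lm:aux:3} relating the averages $\la u\ra_{D\e^\pm}$, $\la u\ra_{D\e^0}$, $\la u\ra_{Y\e}$ and $\la u\ra_{R\e}$ with $|\ln\eps|^{1/2}\|\nabla u\|$ errors. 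I expect the main obstacle to be exactly this step: carefully tracking the passage contribution so that the coefficient of $f_1(0)\overline{\la u\ra_{D\e^0}}$ comes out to be $\gamma$ on the nose, while all the $|\ln\eps|^{1/2}\|\nabla u\|$-type discrepancies are absorbed into $\eps^{\min\{\al,1/2-\be\}}\|f\|_{\HS_0^2}\|u\|_{\HS^1\e}$ (the logarithm being beaten by the positive power of $\eps$, using $|\ln\gamma|\le|\ln\eps|$ from \eqref{eq:def.eps0}), and to do so using only the $\HS^1\e$-norm of $u$ — not the $\HS^2\e$-norm — which forces me to avoid integrating by parts in $u$ and instead rely purely on the mean-value estimates of Lemma~\ref{lm:aux}. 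Finally I would collect the constants into $C_{11}$ and take the worse of the two exponents $\al$ (from the passage width relative to its height, i.e.\ from \eqref{lm:aux:3} and the $P\e$ gradient term) and $1/2-\be$ (from the room-size normalisation $b\e=\eps^\be$).
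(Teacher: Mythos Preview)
Your overall decomposition and the identification of where the discrepancy lives (on $Y\e\cup P\e$ plus the point term) is exactly what the paper does. The treatment of the strip piece $Y\e$ via $\|f_1'\|_{\L(-\eps/2,\eps/2)}\le C\eps^{1/2}\|f\|_{\HS_0^2}$ is also the paper's argument.

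There is, however, a genuine error in your second paragraph. The Cauchy--Schwarz bound you write for the $P\e$ gradient pairing,
\[
h\e^{-1}\bigl|(b\e)^{-1}f_2-\eps^{-1/2}f_1(0)\bigr|\cdot (d\e h\e)^{1/2}\cdot\|\nabla u\|_{\L(P\e)}
= (\gamma\eps)^{1/2}\bigl|(b\e)^{-1}f_2-\eps^{-1/2}f_1(0)\bigr|\cdot\|\nabla u\|_{\L(P\e)},
\]
does \emph{not} give $\eps^{1/2-\be}$: the term $(\gamma\eps)^{1/2}\cdot\eps^{-1/2}|f_1(0)|=\gamma^{1/2}|f_1(0)|$ is $O(1)$. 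You quietly drop it. You then correctly realise in the next paragraph that this $O(1)$ piece must be cancelled by the point term $\gamma f_1(0)\overline{(\wt\J^1\e u)_1(0)}$, so your proposal is self-correcting but internally inconsistent. The clean way to execute this --- and what the paper does --- is to integrate by parts on $P\e$ \emph{in} $\J^1\e f$ (not in $u$): since $\J^1\e f$ is affine in $x_2$ on $P\e$, it is harmonic there, so the gradient pairing becomes the constant $\partial_2(\J^1\e f)$ times $d\e\bigl(\la\overline u\ra_{D\e^+}-\la\overline u\ra_{D\e^-}\bigr)$. Using $d\e/h\e=\gamma\eps$ and $\eps^{1/2}\la u\ra_{D\e^0}=(\wt\J^1\e u)_1(0)$, the leading $O(1)$ piece then \emph{exactly} pairs with the $\gamma$ point term, and the four remaining error terms are precisely the ones controlled by~\eqref{lm:aux:2},~\eqref{lm:aux:2'},~\eqref{lm:aux:1}. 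This is where the $\eps^{1/2-\be}$ genuinely arises, via the factor $\eps^{1/2}(b\e)^{-1}=\eps^{1/2-\be}$ in front of $f_2$ and $\overline{u_{\eps,2}}$.

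A minor correction: the exponent $\al$ in the final bound does \emph{not} come from~\eqref{lm:aux:3} or the passage gradient term (which only yields $\eps^{1/2-\be}$). It enters through the potential term $I^3\e=(V\e(\J^1\e-\J\e)f,u)_{\L(S\e)}$, bounded via Lemma~\ref{lm1} by $\|V_0\|_{\mathsf L^\infty}C_{10}\,\eps^{\min\{1,\al\}}\|f\|_{\HS_0^1}\|u\|_{\HS\e}$; the $\eps^{\al}$ there is the $\|\J^1\e f\|_{\L(P\e)}$ estimate~\eqref{termP}. (In fact, since $V\e=0$ on $P\e$, one could sharpen $I^3\e$ to $O(\eps)$; the $\al$ is only forced by the statement as written, and ultimately by Lemma~\ref{lm1} in the application of Theorem~\ref{thA:1}.)
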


\begin{proof}
Let $f=(f_1,f_2)\in\dom(\A_0) = \HS_0^2$ and $u\in \dom(\A\e)=\HS^2\e$.
We denote 
\begin{equation*}f\e \coloneqq \J^1\e f,\qquad u_{\eps,1}\coloneqq (\wt \J^1\e u)_1,\qquad u_{\eps,2}\coloneqq (\wt \J^1\e u)_2\end{equation*}
(i.e.\ $u_{\eps,1}\in \H^1(\Omega_0)$, $u_{\eps,2}\in\C$  are the first and the second components of $\wt \J^1\e u\in \H^1(\Omega_0)\times\C$, respectively).
Taking into account that $f\e $ is constant on $R\e$, and $V\e=0$ on $P\e\cup R\e$, one has
\begin{gather*}
  \a\e[\J^1\e f,u]-\a_0[f, \wt \J^1\e u]=I^1\e+I^2\e+I^3\e,
\end{gather*}
where
\begin{align*}
  I^1\e&\coloneqq  \left(\nabla f\e,\nabla u\right)_{\L(S\e)} - 
         \left(f_1',u_{\eps,1}'\right)_{\L(\Omega_0)},& 
  \text{(the part on the strip)}\\[1ex]  
  I^2\e&\coloneqq  \left(\nabla f\e,\nabla u\right)_{\L(P\e)} - 
         \gamma f_1(0)\overline {u_{\eps,1}(0)},&
  \text{(the part on the passage)}\\[1ex]  
  I^3\e&\coloneqq   (V\e f\e, u)_{\L(S\e)} - (V_0 f_1,u_{\eps,1})_{\L(\Omega_0)}&
  \text{(the potential term).}
\end{align*} 
 
\subsubsection*{Estimate of $I\e^1$ (on the strip)}  It is easy to see that
$I^1\e= \bigl((f_1\circ \Phi\e)' - f_1',
  u_{\eps,1}'\bigr)_{\L(-\frac\eps2,\frac\eps2)}$, therefore we have
\begin{gather}\label{I:1:1}
  |I^1\e|
  \leq 
  \| (f_1\circ \Phi\e)' - f_1 '\| _{\L(-\frac\eps2,\frac\eps2)}
  \|u_{\eps,1}'\| _{\L(-\frac\eps2,\frac\eps2)}.
\end{gather}
Using the chain rule and~\eqref{g:prop}, we obtain
\begin{multline}\label{I:1:3}
  \|(f_1\circ \Phi\e)' - f_1'\| _{\L(-\frac\eps2,\frac\eps2)}\\
  \leq 
  \| (f_1'\circ \Phi\e) \cdot \Phi\e'\|_{\L((-\frac\eps2,\frac\eps2)\setminus [-\frac{d\e}2,\frac{d\e}2])} 
  +\| f_1  '\| _{\L(-\frac\eps2,\frac\eps2)}
  \leq 3\| f'_1 \| _{\L(-\frac\eps2,\frac\eps2)}
\end{multline}
Since $f\in\dom(\A_0)$, one has 
\begin{gather}
\label{f'}
f_1'\in\H^1(\Omega_0^+)\text{\quad and\quad }f_1'\in\H^1(\Omega_0^-).
\end{gather}
Then, by virtue of \eqref{Sobolev1} and \eqref{f'} 
we can continue the (square of) estimate~\eqref{I:1:3} as follows:
\begin{align} \notag
  \|(f_1\circ \Phi\e)' - f_1'\|_{\L(-\frac\eps2,\frac\eps2)}^2
  & \leq  \frac{9\eps}2 \left(\|f\|_{\mathsf{L}^\infty(-{\frac\eps2},0)}^2+
  \|f\|_{\mathsf{L}^\infty(0,{\frac\eps2})}^2\right)\\
  &\leq
  \frac{9\eps}2\max\big\{\ell_{\wt\Omega_0^-};\,\ell_{\wt\Omega_0^+}\big\}\left(\| f_1''\|^2_{\L(\Omega_0^- \cup \Omega_0^+)}
  +\| f_1'\|^2_{\L(\Omega_0)}\right). \label{I:1:4} 
\end{align}
Finally, we need the estimates  
\begin{gather}\label{I:1:5}
  \| f_1'\| _{\L(\Omega_0)}\leq \|f\|_{\HS_0^1}\leq \|f\|_{\HS_0^2},
\end{gather}
and
\begin{align}
  \nonumber
  \| f_1''\| _{\L(\Omega_0^-\cup \Omega_0^+)}
  &\le
    \|\widehat \A_0 f_1+f_1\|_{\L(\Omega_0^-\cup \Omega_0^+)}+ \|V_0 f_1 
    + f_1\| _{\L(\Omega_0^-\cup \Omega_0^+)}\\
  &\leq \|f\|_{\HS^2_0}+(\norm{V_0}_{\mathsf{L}^\infty(\Omega_0)}+1)\|f\| _{\HS_0}
    \leq (\norm{V_0}_{\mathsf{L}^\infty(\Omega_0)}+2)\|f\|_{\HS_0^2}\label{I:1:6}
\end{align}
(to derive these  estimates we have used, in particular,~\eqref{scale}).
Inequalities~\eqref{I:1:4}--\eqref{I:1:6} yield
\begin{gather}\label{I:1:7}
  \|(f_1\circ \Phi\e)' - f_1'\| _{\L(-\frac\eps2,\frac\eps2)}
  \leq C_{12}\eps^{1/2} \|f\|_{\HS_0^2}
\end{gather}
with $C_{12}=
\sqrt{\ds\frac92\max\big\{\ell_{\wt\Omega_0^-};\,\ell_{\wt\Omega_0^+}\big\} \left((\norm{V_0}_{\mathsf{L}^\infty(\Omega_0)} + 2)^2+1\right)}$.
Also, using~\eqref{H1}, we get
\begin{gather}\label{I:1:8}
  \|u_{\eps,1}'\| _{\L(-\frac\eps2,\frac\eps2)}\le 
  \|\partial_1u\|_{\L(Y\e)}\leq 
  \|\nabla u\|_{\L(Y\e)}\leq
  \|u\|_{\HS\e^1}.
\end{gather}
Combining~\eqref{I:1:1},~\eqref{I:1:7},~\eqref{I:1:8} we arrive at the estimate
\begin{gather}
\label{I:1:final}
|I^1\e|\leq C_{12}\eps^{1/2} \|f\|_{\HS_0^2} \|u\|_{\HS\e^1}.
\end{gather}

\subsubsection*{Estimate of $I\e^2$ (on the passage)}
Now we come to the key part of the form estimate, where the $\delta$-potential in the form $\a_0$ appears.  We have
\begin{gather}
  \label{I:2:1}
  \Delta f\e=0,\qquad 
  \partial_1 f\e=0,\qquad 
  \partial_2 f\e=\frac1{h\e}
  \bigl( (b\e)^{-1}f_2 - \eps^{-1/2}f_1(0) \bigr)  \text{\quad on\quad }P\e.
\end{gather}
Using~\eqref{I:2:1} and integrating by parts, we obtain
\begin{align}
  \nonumber
  I^2\e&=\int_{\partial P\e}(\partial_{\mathrm{n}} f\e(\x)) \cdot 
         \overline {u(\x)} \d \x 
         - \gamma f_1(0)\overline{u_{\eps,1}(0)}\\
  \nonumber
       &=\partial_2 f\e \cdot \Bigl(\int_{D\e^+} \overline{u(\x)} \d \x
         -\int_{D\e^-} \overline{u(\x)} \d \x
         \Bigr)
         - \gamma f_1(0) \overline{u_{\eps,1}(0)}\\
  \nonumber
       &=\frac1{h\e} 
         \bigl((b\e)^{-1}f_2 -\eps^{-1/2}f_1(0) \bigr)
         \cdot d\e 
         \bigl(\la \overline u \ra_{D\e^+} - \la \overline u \ra_{D\e^-}\bigr) 
         - \gamma f_1(0) \overline{u_{\eps,1}(0)} \\
  \label{I:2:2}
       &=\gamma \bigl(\eps^{1/2}(b\e)^{-1}f_2 - f_1(0) \bigr) \cdot \eps^{1/2}
         \bigl(\la \overline u \ra_{D\e^+} - \la \overline u \ra_{D\e^-}\bigr) 
         - \gamma f_1(0) \overline{u_{\eps,1}(0)},
\end{align}
where $\partial_{\mathrm{n}}$ stands for the normal derivative on  $\partial P\e$, and where we used $d\e/h\e=\eps\gamma$ in the last line.
Taking into account that
\begin{equation*}
  \eps^{1/2}\la u \ra_{D\e^0}=u_{\eps,1}(0)
  \qquad\text{and}\qquad
  b\e\la u \ra_{R\e}=u_{\eps,2},
\end{equation*}
one can rewrite~\eqref{I:2:2} as follows, 
\begin{gather*}
I^2\e=I\e^{2,1}+I\e^{2,2}+I\e^{2,3}+I\e^{2,4},
\end{gather*}
where
\begin{align*} 
  I\e^{2,1}
  &=\gamma \bigl(\eps^{1/2}(b\e)^{-1}f_2 -f_1(0)\bigr)
    \cdot \eps^{1/2} \bigl(\la \overline u \ra_{D\e^+}
    - \la \overline u \ra_{R\e}\bigr),\\
  I\e^{2,2}
  &=\gamma \bigl(\eps^{1/2}(b\e)^{-1}f_2 -f_1(0)\bigr)
    \cdot \eps^{1/2} (b\e)^{-1} \overline u_{\eps,2},\\%
  I\e^{2,3}
  &=-\gamma \eps (b\e)^{-1} f_2 \la \overline u \ra_{D\e^-},\\
  I\e^{2,4}
  &= \gamma f_1(0) \cdot \eps^{1/2}\bigl(\la \overline u \ra_{D\e^-}
    - \la \overline u \ra_{D\e^0}\bigr).
\end{align*}
We first estimate the common first factor in $I\e^{2,1}$ and $I\e^{2,2}$ by
\begin{equation*}
  \abs[\big]{\eps^{1/2}(b\e)^{-1}f_2 -f_1(0)}^2
  \le \frac {2\eps}{b\e^2} \abs{f_2}^2 
  + 2 \ell_{\wt\Omega_0} \norm{f_1}_{\H^1(\Omega_0)}^2
  \le 2 \ell_{\wt\Omega_0} \norm f_{\HS_0^1}^2
\end{equation*}
using~\eqref{Sobolev1} and the fact that
$\eps(b\e)^{-2}=\eps^{1-2\beta}\le 1<\ell_{\wt\Omega_0}$ provided $\beta<1/2$ (as
usual $\eps \le \eps_0<1$). 
In particular, using~\eqref{lm:aux:2} and the fact that the quantity $\eps^\beta \abs{\ln \eps}^{1/2}$
is uniformly bounded as $\eps\in (0,1]$, namely
\begin{gather}\label{epsbeta:est}
\eps^\beta \abs{\ln \eps}^{1/2}\le (2\beta\euler)^{-1/2}\text{ as }\eps\in (0,1],
\end{gather}
we have
\begin{align*}
  \abs{I\e^{2,1}}
  &\le \gamma (2\ell_{\wt\Omega_0})^{1/2} C_3  \eps^{1/2} \abs{\ln \eps}^{1/2} 
    \norm f_{\HS_0^1} \norm{\nabla  u}_{\L(R\e)}\\
  &\le \gamma (2\ell_{\wt\Omega_0})^{1/2} C_3  \eps^\beta \abs{\ln \eps}^{1/2} 
  \cdot \eps^{1/2-\beta} \norm f_{\HS_0^1} \norm{\nabla  u}_{\L(R\e)}\\
  &\le \underbrace{\gamma (2\ell_{\wt\Omega_0})^{1/2} C_3 (2\beta\euler)^{-1/2}}%
    _{=:C_{13}}
  \cdot \eps^{1/2-\beta} \norm f_{\HS_0^1} \norm{\nabla  u}_{\L(R\e)}.
\end{align*}
Moreover,
\begin{align*}
  \abs{I\e^{2,2}}
  &\le \underbrace{\gamma (2\ell_{\wt\Omega_0})^{1/2}}_{=:C_{14}}
    \cdot \eps^{1/2-\beta} 
    \norm f_{\HS_0^1} \norm u_{\L(R\e)}
\end{align*}
using the fact that $\abs{u_{\eps,2}}\le\norm u_{\L(R\e)}$.  For the third term $I\e^{2,3}$ we need the estimate
\begin{align*}
  \abs[\big]{\la u \ra_{D\e^-}}
  &\le  \abs[\big]{\la u \ra_{D\e^-} -\la u \ra_{Y\e} }
  + \abs[\big]{\la u \ra_{Y\e}}\\
  &\le  C_4 \abs{\ln \eps}^{1/2} 
    \norm{\nabla u}_{\L(Y\e)} + \eps^{-1}\norm u_{\L(Y\e)}\\
  &\le C_4 \abs{\ln \eps}^{1/2} \norm{\nabla u}_{\L(Y\e)}+
  \eps^{-1/2}(\ell_{\wt\Omega_0})^{1/2} \| u\|_{\H^1(S\e)}\\
 &\le \left( C_4 \abs{\ln \eps}^{1/2} + 
  \eps^{-1/2}(\ell_{\wt\Omega_0})^{1/2}\right)   \norm u_{\H^1(S\e)},
\end{align*}
following by~\eqref{lm:aux:2'} and~\eqref{aux:5}.  Hence, we have 
\begin{align*}
  \abs{I\e^{2,3}}
  \le \gamma \eps (b\e)^{-1} \abs{f_2} \abs[\big]{\la u \ra_{D\e^-}}
  &\le \gamma 
   \left( C_4 \abs{\ln \eps}^{1/2} + 
   \eps^{-1/2}(\ell_{\wt\Omega_0})^{1/2}\right)
    \eps^{1-\beta} \abs{f_2}\norm u_{\H^1(S\e)}\\
  &\le \underbrace{\gamma 
    \left( C_4  + 
  (\ell_{\wt\Omega_0})^{1/2}\right)}_{=:C_{15}}
    \eps^{1/2-\beta}
    \abs{f_2} \norm u_{\H^1(S\e)}
\end{align*}
(in the last estimate we use the fact that
$\eps \leq \eps_0<1$, whence $\eps|\ln\eps|<1$).
Finally, we have
\begin{align*}
  \abs{I\e^{2,4}}
  &\le \gamma (\ell_{\wt\Omega_0})^{1/2} C_5 \cdot \eps^{1/2} 
  \abs{\ln \eps}^{1/2}
  \norm {f_1}_{\H^1(\Omega_0)} \norm{\nabla u}_{\L(Y\e)}\\
  &\le \underbrace{\gamma (\ell_{\wt\Omega_0})^{1/2} C_5
    (2\beta\euler)^{-1/2}}_{=:C_{16}}
    \cdot \eps^{1/2-\beta} 
  \norm {f_1}_{\H^1(\Omega_0)} \norm{\nabla u}_{\L(Y\e)},
\end{align*}
using~\eqref{lm:aux:1},~\eqref{Sobolev1}, and~\eqref{epsbeta:est}.  As a result, we arrive at
the estimate
\begin{gather}
  \label{I:2:final}
  |I\e^{2}| \leq (C_{13}+C_{14}+C_{15}+C_{16}) \eps^{1/2-\be}\|f\|_{\HS_1}\|u\|_{\HS_1}.
\end{gather}

\subsubsection*{Estimate of $I\e^3$ (the potential term)}
By virtue of~\eqref{J},~\eqref{J*},~\eqref{V},~\eqref{J1'}
one  gets  the equality
\begin{gather*} 
  I^3\e=  (V\e (\J\e^1 - \J\e)f, u)_{\L(S\e)},
\end{gather*}
hence, using Lemma~\ref{lm1}, we obtain the estimate
\begin{gather}\label{I:3:final}
  |I^3\e|
  \leq \norm{V_0}_{\mathsf{L}^\infty(\Omega_0)} C_{10} 
  \eps^{\min\{1,\al\}}\|f\|_{\HS_0^1}\|u\|_{\HS\e};
\end{gather}
note that $\|V\e\|_{\mathsf{L}^\infty(S\e)}=\|V_0\|_{\mathsf{L}^\infty(\Omega_0)}$. 

Finally,
combining~\eqref{I:1:final},~\eqref{I:2:final},~\eqref{I:3:final} and
taking into account~\eqref{scale}, we arrive at the desired
estimate~\eqref{lm2:est}
with
\begin{equation*}
  C_{11} = C_{12} + C_{13}+C_{14}+C_{15}+C_{16}
  + \norm{V_0}_{\mathsf{L}^\infty(\Omega_0)} C_{10}
  \qedhere
\end{equation*}
\end{proof}
It follows from~\eqref{J*},~\eqref{J1'},~\eqref{lm1:est} and~\eqref{lm2:est} that the conditions of Theorem~\ref{thA:1} (applied to the spaces and operators as in~\eqref{HHAA}) hold with
\begin{equation*}
  \delta=\max\{C_{10},C_{11}\}\eps^{\min \{\al, 1/2-\be\}}.
\end{equation*}
Hence, applying Theorem~\ref{thA:1}, we immediately arrive at the desired first estimate in~\eqref{th1:1}; the second follows from $\wt\J\e\Res\e - \Res_0 \wt\J\e=(\Res\e\J\e- \J\e \Res_0 )^*=\mathcal{L}\e^*$ and $\norm{\mathcal{L}\e}=\norm{\mathcal{L}\e^*}$.  In particular Theorem~\ref{th1} is proven with
$C_1=4\max\{C_{10},C_{11}\}$.
\begin{remark}[on the error estimates in Theorem~\ref{th1}]
  \label{rem:constants}
  Tracing the model parameters, we see that the constant $C_1=4\max\{C_{10},C_{11}\}$ depends on upper bounds of the parameters $\norm V_{\mathsf{L}^\infty(\Omega_0)}$, $\ell^{-1/2}$, $\beta^{-1/2}$, $\alpha^{1/2}$ and $\gamma$ entering in our model.
  Note that the (worst) error $\eps^{1/2-\beta}$ appears in
  $I_\eps^{2,2}$ and $I_\eps^{2,3}$.
  All other error terms are of better order, namely
  \begin{equation*}
     I_\eps^1=\mathcal O(\eps^{1/2}), \quad
    I_\eps^{2,1}, I_\eps^{2,4}=\mathcal O((\eps \abs{\ln \eps})^{1/2}), 
    \quad\text{and}\quad
    I_\eps^3=\mathcal O(\eps^{\min\{1,\alpha\}})
  \end{equation*}
  are better; and the $\eps^\alpha$-term comes from $\wt\J\e^1 u$ on
  the second component, i.e.\ from the $\L$-estimate of $u$ on $P\e$,
  see~\eqref{termP}.
\end{remark}
 
\subsection{Proof of Theorem~\ref{th2}}

To prove Theorem~\ref{th2} we will use Theorem~\ref{thA:2}.

\begin{lemma}
  \label{lm3}
  One has
  \begin{gather}\label{lm3:est}
    \forall u\in\dom(\a\e) \colon\quad 
    \norm u^2_{\HS\e}
    \leq \mu_\eps \norm{\wt\J\e u}^2_{\HS_0} 
    +  \nu_\eps \a\e[u,u]
    \intertext{with}
    \nonumber
    \mu_\eps = 1+ C_6 \eps^{2\alpha} \qquad\text{and}\qquad
    \nu_\eps 
    = C_{17} \eps^{2\min\{\alpha,\beta\}}.
  \end{gather}
\end{lemma}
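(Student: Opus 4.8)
The plan is to exploit that, up to a null set, $\Omega\e$ is the disjoint union $S\e\cup P\e\cup R\e$, so that $\|u\|^2_{\HS\e}=\|u\|^2_{\L(S\e)}+\|u\|^2_{\L(P\e)}+\|u\|^2_{\L(R\e)}$, and to recognise $\|\wt\J\e u\|^2_{\HS_0}$ as the sum of the ``transversally averaged'' part of the strip norm and the ``averaged'' part of the room norm. Setting $\bar u(x_1)\coloneqq\eps^{-1}\int_{-\eps}^0 u(x_1,x_2)\d x_2$, one reads off from~\eqref{J'} that $(\wt\J\e u)_1(x_1)=\eps^{1/2}\bar u(x_1)$ and $(\wt\J\e u)_2=b\e\,\la u\ra_{R\e}$, hence $\|(\wt\J\e u)_1\|^2_{\L(\Omega_0)}=\|\bar u\|^2_{\L(S\e)}$ and $|(\wt\J\e u)_2|^2=|R\e|\,|\la u\ra_{R\e}|^2$. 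So $\|\wt\J\e u\|^2_{\HS_0}$ captures precisely the constant-in-the-fibre parts of $\|u\|^2$ on $S\e$ and $R\e$.

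First I would treat the strip: for each fixed $x_1$ the functions $u(x_1,\cdot)-\bar u(x_1)$ and $\bar u(x_1)$ are orthogonal in $\L^2(-\eps,0)$, so Pythagoras together with the one–dimensional Poincaré inequality on an interval of length $\eps$ (Poincaré constant $\pi^{-2}\eps^2$), followed by integration in $x_1$, gives
\[\|u\|^2_{\L(S\e)}=\|u-\bar u\|^2_{\L(S\e)}+\|(\wt\J\e u)_1\|^2_{\L(\Omega_0)}\le\pi^{-2}\eps^2\,\|\nabla u\|^2_{\L(S\e)}+\|(\wt\J\e u)_1\|^2_{\L(\Omega_0)}.\]
In the same way, Pythagoras on the square $R\e$ of side $b\e=\eps^\be$ together with the Poincaré inequality for zero–mean functions on that square (Poincaré constant $\pi^{-2}\eps^{2\be}$) yields $\|u\|^2_{\L(R\e)}\le\pi^{-2}\eps^{2\be}\|\nabla u\|^2_{\L(R\e)}+|(\wt\J\e u)_2|^2$.

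Next I would handle the passage via~\eqref{lm:aux:3}, which bounds $\|u\|^2_{\L(P\e)}$ by $C_6\eps^{2\al}$ times $\|u\|^2_{\L(S\e)}+\|\nabla u\|^2_{\L(S\e)}+\|\nabla u\|^2_{\L(P\e)}$; re-inserting the strip estimate (and $\pi^{-2}\eps^2<1$) turns this into
\[\|u\|^2_{\L(P\e)}\le C_6\eps^{2\al}\|(\wt\J\e u)_1\|^2_{\L(\Omega_0)}+C_6\eps^{2\al}\bigl(2\|\nabla u\|^2_{\L(S\e)}+\|\nabla u\|^2_{\L(P\e)}\bigr).\]
Summing the three contributions and using $\|(\wt\J\e u)_1\|^2_{\L(\Omega_0)}+|(\wt\J\e u)_2|^2=\|\wt\J\e u\|^2_{\HS_0}$ gives
\[\|u\|^2_{\HS\e}\le(1+C_6\eps^{2\al})\|\wt\J\e u\|^2_{\HS_0}+\bigl(\pi^{-2}\eps^2+2C_6\eps^{2\al}\bigr)\|\nabla u\|^2_{\L(S\e)}+C_6\eps^{2\al}\|\nabla u\|^2_{\L(P\e)}+\pi^{-2}\eps^{2\be}\|\nabla u\|^2_{\L(R\e)}.\]
Since $\be<1/2<1$ and $\eps\le\eps_0<1$, each of the three gradient prefactors is at most $(\pi^{-2}+3C_6)\eps^{2\min\{\al,\be\}}$, and $V\e\ge0$ gives $\|\nabla u\|^2_{\L(\Omega\e)}\le\a\e[u,u]$; collecting the gradient terms yields the claim with $\mu_\eps=1+C_6\eps^{2\al}$ and $\nu_\eps=C_{17}\eps^{2\min\{\al,\be\}}$ where $C_{17}=\pi^{-2}+3C_6$.

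Everything here is elementary; the only step requiring care is the passage term: a crude bound such as $\|u\|^2_{\L(P\e)}\le\|u\|^2_{\HS\e}$ would be useless, and one must instead use~\eqref{lm:aux:3} — which is exactly what shows that the passage contributes merely an $O(\eps^{2\al})$ perturbation, fed back through the strip data into the $C_6\eps^{2\al}$ summand of $\mu_\eps$ rather than spoiling the estimate. Fixing the correct Poincaré constants on the interval and on the square, and the final constant bookkeeping for $C_{17}$, are the remaining routine items.
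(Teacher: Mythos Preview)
Your proposal is correct and follows essentially the same route as the paper's proof: split $\|u\|^2_{\HS\e}$ over $S\e$, $P\e$, $R\e$; use the transversal Poincar\'e inequality on the strip fibres and on the room to extract the $\wt\J\e u$ terms; and invoke~\eqref{lm:aux:3} for the passage, feeding the strip estimate back in. The only difference is bookkeeping: the paper immediately weakens $\eps^2$ to $\eps^{2\beta}$ in the strip term and arrives at $C_{17}=\pi^{-2}+C_6(\pi^{-2}+1)$, whereas you keep $\eps^2$ longer and obtain $C_{17}=\pi^{-2}+3C_6$ (in fact $\pi^{-2}+2C_6$ already suffices in your count); since the lemma does not fix the numerical value of $C_{17}$, this is immaterial.
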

\begin{proof}
  For a bounded domain $D$ we have the following (equivalent version of the) Poincar\'e inequality, namely
  \begin{equation}
    \label{eq:poincare}
    \norm u_{\L(D)}^2
    \le \abs D \abs[\big]{\la u \ra_D}^2 
    + \frac 1{\lambda_2(D)} \norm{\nabla u}_{\L(D)}^2
  \end{equation}
  for $u \in \H^1(D)$, where $\lambda_2(D)$ denotes the second (first non-zero) Neumann eigenvalue of $D$. Applying this inequality with $D=\{x_1\} \times (-\eps,0)$, we obtain
  \begin{gather*}
    \int_{-\eps}^0|u(x_1,x_2)|^2\d x_2\leq 
    \abs[\Big]{\eps^{-1/2}\int_{-\eps}^0u(x_1,x_2)\d x_2}^2 +
    \frac{\eps^2}{\pi^2} \int_{-\eps}^0|\partial_2u(x_1,x_2)|^2\d x_2
  \end{gather*}
  for (almost) all $x_1 \in \Omega_0$.  Integrating this inequality over $\Omega_0$ with respect to $x_1$ and taking into account the definition of $\wt\J\e$ in~\eqref{J'} and the fact that $\be<1$, 
  we arrive at the estimate
  \begin{align}
    \nonumber
    \forall u\in\H^1(S\e) \colon \quad
    \norm u^2_{\L(S\e)}
    &\leq \norm{(\wt\J\e u)_1}^2_{\L(\Omega_0)} +
     \frac {\eps^2}{\pi^2} \norm{\partial_2u}^2_{\L(S\e)}
     \\\label{Poincare1}
     &\leq \norm{(\wt\J\e u)_1}^2_{\L(\Omega_0)} +
     \frac {\eps^{2\be}}{\pi^2} \norm{\nabla u}^2_{\L(S\e)}. 
  \end{align}
    Similarly, applying~\eqref{eq:poincare} with $D=R\e$, we obtain
  \begin{gather}
    \label{Poincare2}
    \norm u^2_{\L(R\e)}
    \leq \abs[\big]{(\wt\J\e u)_2}^2  
    + \frac{\eps^{2\beta}}{\pi^2} \norm{\nabla u}^2_{\L(R\e)}
  \end{gather}
  using also $b_\eps=\eps^\beta$.  Finally, by virtue of~\eqref{lm:aux:3} and~\eqref{Poincare1}, we obtain
  \begin{align}    \label{Poincare5}
    \norm u^2_{\L(P\e)}
    \leq C_6\eps^{2\al}
    \Bigl(\norm[\big]{(\wt\J\e u)_1}^2_{\L(\Omega_0)}
    + \frac {\eps^{2\beta}}{\pi^2} \norm{\nabla u}^2_{\L(S\e)} 
      + \norm{\nabla u}^2_{\L(S_\eps)}
      + \norm{\nabla u}^2_{\L(P_\eps)}
      \Bigr),
  \end{align} 
  where $C_6$ is given in~\eqref{C6}.  Summing
  up~\eqref{Poincare1}--\eqref{Poincare5}, and taking into account
  that $\eps^{2\beta}< 1$ and $C_6 > 1/\pi^2$, we
  arrive at the desired estimate~\eqref{lm3:est} with 
  $C_{17}\coloneqq \pi^{-2}+C_6(\pi^{-2}+1)$.
\end{proof}

It follows from~\eqref{th1:1} and~\eqref{lm3:est} that the conditions of Theorem~\ref{thA:2} (applied to the spaces and operators as in~\eqref{HHAA}) hold with 
\begin{equation*}
  \eta=\wt\eta=4C_1\eps^{\max\{\alpha, 1/2-\beta\}},\quad 
  \mu=1,\quad \wt\mu=\mu_\eps=1+C_6\eps^{2\alpha},\quad 
  \nu=0,\quad \wt\nu=\nu_\eps=C_{17}\eps^{2\min\{\alpha,\beta\}},
\end{equation*} 
hence, applying Theorem~\ref{thA:2} with $\kappa=\wt\kappa=1/2$ and taking into account~\eqref{dd}, we immediately arrive at the desired estimate~\eqref{th2:est} with
\begin{equation}
  \label{eq:c2}
  C_2=\max \bigr\{4C_1\sqrt{2(1+C_6)},2C_{17} \bigl\}.
\end{equation}
The error $\eps^{2\beta}$ not yet appearing in the estimates of Theorem~\ref{th1} comes from the contribution of $u$ on
$R_\eps$ in~\eqref{Poincare2}.

\subsection{Proof of the quasi-unitary equivalence}

In this subsection, we additionally show that the identification
operators are also \emph{quasi-unitarily equivalent}, see
Section~\ref{sec:quasi-uni}).  From this concept, also used
in~\cite{P06,P12}, the convergence of operator functions as in
Proposition~\ref{prp:conv.op.fcts} follows.  Note that we have given a
more explicit (and better) estimate on the spectral convergence in
Theorem~\ref{thA:2} here, although it was already shown
in~\cite{P06,P12}.  We have commented on the differences in
Remark~\ref{rem:why-not-quasi-uni}.
\begin{lemma}
  \label{lm4}
  We have  $\wt \J\e \J\e f=f$ for all $f \in \HS_0$ and
  \begin{gather*}
    \forall u\in\dom(\a\e) \colon\quad 
    \norm {u- \J\e \wt\J\e u}_{\HS\e}
    \leq C_{18}\eps^{\min\{\alpha,\beta\}} \norm u_{\HS_\eps^1} 
  \end{gather*}
\end{lemma}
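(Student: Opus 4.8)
The first identity $\wt\J\e\J\e f=f$ is immediate: on the strip $\wt\J\e$ averages $\eps^{-1/2}f_1$ over $(-\eps,0)$ and multiplies by $\eps^{-1/2}$, returning $f_1$; on the room $\wt\J\e$ averages the constant $(b\e)^{-1}f_2$ over $R\e$ and multiplies by $|R\e|/b\e=b\e$... more carefully, $(\wt\J\e\J\e f)_2=(b\e)^{-1}\int_{R\e}(b\e)^{-1}f_2\d\x=(b\e)^{-2}|R\e|f_2=f_2$; the passage contributes nothing since $\J\e f$ vanishes on $P\e$ and $\wt\J\e$ ignores $P\e$. So $\J\e$ is indeed an isometric co-isometry onto its range.

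For the second estimate the plan is to compute $u-\J\e\wt\J\e u$ piece by piece on $Y\e$ (or $S\e$), $P\e$, and $R\e$ and bound the $\L$-norm of each piece by $\eps^{\min\{\al,\be\}}\norm u_{\HS\e^1}$. On the strip, $(\J\e\wt\J\e u)(x_1,x_2)=\eps^{-1/2}(\wt\J\e u)_1(x_1)=\eps^{-1}\int_{-\eps}^0 u(x_1,s)\d s=\la u(x_1,\cdot)\ra_{\{x_1\}\times(-\eps,0)}$, so $u-\J\e\wt\J\e u$ on $S\e$ is exactly $u$ minus its transverse average; by the Poincaré inequality~\eqref{eq:poincare} on each fiber $\{x_1\}\times(-\eps,0)$ (eigenvalue $\pi^2/\eps^2$) this is bounded by $(\eps/\pi)\norm{\partial_2 u}_{\L(S\e)}\le\eps\norm{\nabla u}_{\L(S\e)}$, which is $\mathcal O(\eps)\norm u_{\HS\e^1}$. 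On the room, $(\J\e\wt\J\e u)$ equals $(b\e)^{-1}(\wt\J\e u)_2=\la u\ra_{R\e}$, so $u-\J\e\wt\J\e u=u-\la u\ra_{R\e}$ on $R\e$; again~\eqref{eq:poincare} applied to $R\e$ (second Neumann eigenvalue $\sim\pi^2/b\e^2=\pi^2\eps^{-2\be}$) gives the bound $(b\e/\pi)\norm{\nabla u}_{\L(R\e)}=\pi^{-1}\eps^{\be}\norm{\nabla u}_{\L(R\e)}$, of order $\eps^\be\norm u_{\HS\e^1}$. On the passage, $\J\e\wt\J\e u$ vanishes (since $\J\e$ is zero on $P\e$), so the contribution is just $\norm u_{\L(P\e)}$, and estimate~\eqref{lm:aux:3} from Lemma~\ref{lm:aux} bounds this by $C_6^{1/2}\eps^{\al}\bigl(\norm u_{\L(S\e)}^2+\norm{\nabla u}_{\L(S\e)}^2+\norm{\nabla u}_{\L(P\e)}^2\bigr)^{1/2}\le C_6^{1/2}\eps^\al\norm u_{\HS\e^1}$.

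Adding the three contributions and using $\norm{\nabla u}_{\L(S\e)}^2+\norm{\nabla u}_{\L(R\e)}^2+\norm{\nabla u}_{\L(P\e)}^2\le\norm u_{\HS\e^1}^2$, one gets the claimed bound with $C_{18}$ expressible through $C_6$ and absolute constants (e.g.\ $C_{18}=1+\pi^{-1}+C_6^{1/2}$ after absorbing $\eps\le\eps_0<1$ to replace $\eps$ and $\eps^\al,\eps^\be$ by $\eps^{\min\{\al,\be\}}$ where $\min\{\al,\be\}\le 1$ is used only when $\al\ge 1$). The one point requiring a little care is that the transverse-average operator on the strip must be identified correctly with $\J\e\wt\J\e$ restricted to $S\e$ — this is where the factors $\eps^{-1/2}$ in the definitions~\eqref{J} and~\eqref{J'} cancel — and similarly on $R\e$ the factors $(b\e)^{-1}$ and $|R\e|=b\e^2$ must be tracked; once those bookkeeping checks are done, everything reduces to the two Poincaré inequalities already in hand plus the already-proven passage estimate~\eqref{lm:aux:3}. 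There is no serious obstacle here; the main risk is merely a normalization slip, and the only genuinely new ingredient beyond Lemma~\ref{lm3} is recognizing that $\J\e\wt\J\e$ acts as a fiberwise (resp.\ roomwise) averaging projection.
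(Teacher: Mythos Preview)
Your proof is correct and follows essentially the same route as the paper: identify $\J\e\wt\J\e u$ as the fiberwise average on $S\e$ and the full average on $R\e$, apply the Poincar\'e inequality~\eqref{eq:poincare0} on each, and invoke~\eqref{lm:aux:3} for the passage. The only cosmetic difference is that the paper sums the three contributions in $\L$-squared (yielding $C_{18}=(C_6+\pi^{-2})^{1/2}$) rather than adding the norms, and uses $\beta<1$ directly to absorb the $\eps^2$ strip term into $\eps^{2\min\{\alpha,\beta\}}$.
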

\begin{proof}
  For a domain $D$, the standard Poincar\'e inequality
  \begin{equation}
    \label{eq:poincare0}
    \norm {u-\la u \ra_D}_{\L(D)}^2
    \le \frac 1{\lambda_2(D)} \norm{\nabla u}_{\L(D)}^2
  \end{equation}
  holds (in fact,~\eqref{eq:poincare} is equivalent to~\eqref{eq:poincare0}). Using~\eqref{eq:poincare0} with $D=\{x_1\}\times (-\eps,0)$ and $D=R_\eps$, and the estimate~\eqref{lm:aux:3}, we obtain  
  \begin{align*}
    \norm {u- \J\e \wt\J\e u}^2_{\HS\e}
    &=\int_{\Omega_0} \norm{u(x_1,\cdot)- 
      \la u(x_1,\cdot) \ra_{(-\eps,0)}}_{\L((-\eps,0))}^2 \d x_1 
      + \norm u_{\L(P_\eps)}^2+\norm{u-\la u \ra_{R_\eps}}_{\L(R_\eps)}^2\\
    &\le \frac {\eps^2}{\pi^2}
      \int_{\Omega_0} \norm{\partial_2 u(x_1,\cdot)}_{\L((-\eps,0))}^2 \d x_1 
      \\ &\hspace*{0.1\textwidth}
           + C_6\eps^{2\al}
           \Bigl(\|u\|^2_{\L(S\e)} +\norm{\nabla u}_{\L(S\e)}^2 
      + \|\nabla u\|_{\L(P\e)}^2\Bigr)
      + \frac {\eps^{2\beta}}{\pi^2} \norm{\nabla u}_{\L(R_\eps)}^2\\
    &\le C^2_{18} \eps^{2\min\{\alpha,\beta\}}\norm{u}^2_{\HS^1\e},
  \end{align*}
  with $C_{18}\coloneqq (C_6+\pi^{-2})^{1/2}$ (for the last step note
  that $\beta<1$ and $C_6 > 1/\pi^2$). The lemma is proven.
\end{proof}

\section{Countably many $\delta$-interactions}
\label{sec5}
 
The obtained results can be easily extended to 
Schr\"odinger operators with countably many $\delta$-interactions. 
Namely, let  $\Omega_0=(\ell_-,\ell_+)$ and let $Z \subset \Omega_0$ be an at most countable set satisfying
\begin{gather}
  \label{d:countable}
  \inf_{z,z'\in Z, z \ne z'}|z-z'|>0 .
\end{gather}
Note that if $Z$ is infinite, then~\eqref{d:countable} is possible only for $|\Omega_0|=\infty$.
Let $V_0\in\mathsf{L}^\infty(\Omega_0)$ and $(\gamma_z)_{z \in Z}$ be a family of positive numbers
such that 
\begin{gather}\label{gamma:countable}
  \sup_{z\in Z}\gamma_z<\infty.
\end{gather}
In $\L(\Omega_0)$ we consider the operator $\widehat \A_Z$
defined by
the operation 
\begin{equation*}
  -\dfrac{\d^2}{\d x^2}+V_0
  \quad\text{on}\quad \Omega_0\setminus Z,
\end{equation*}
Neumann conditions at $\ell_-$ (provided $\ell_->-\infty$) and 
$\ell_+$ (provided $\ell_+<\infty$), and $\delta$-coupling with strength $\gamma_z$ at $z\in Z$.
The case 
\begin{gather}\label{KP+}
Z=\Z,\quad \gamma_z=\gamma,\quad V_0 = 0   
\end{gather}
corresponds to the Kronig-Penney model. 
To approximate  $\widehat\A_Z$, we consider the domain 
\begin{gather}
  \label{Omegae}
  \Omega\e
  =\intr(\overline{S\e\cup
    \Bigr(\bigcup_{z\in Z}( P_{z,\eps} \cup R_{z,\eps})
    \Bigr)},
\end{gather}
consisting of the straight strip $S\e=\Omega_0\times (-\eps,0)$, and
the family of ``rooms'' $R_{z,\eps}$ and ``passages'' $P_{z,\eps}$ given
by
\begin{equation*}
  R_{z,\eps}
  =\Bigl(z-\frac{b\e}2, z + \frac{b\e}2 \Bigr)
  \times(h\e,h\e+b\e),\quad
  \ds  P_{z,\eps}=
  \Bigl(z- \frac{d_{z,\eps}}2,z+ \frac{d_{z,\eps}}2\Bigr)
  \times (0,h\e).
\end{equation*}
Here $d\e=\gamma_z\eps^{\al+1}$, $h\e=\eps^\al$, $b\e=\eps^\be$ with $\al>0$ and $0<\be<\frac12$.
We choose $\eps$ to be small enough, such that the bottom part (respectively, the top part) of $\partial P_{z,\eps}$ is contained in the top part of  $\partial S\e$ (respectively, the bottom part of  $\partial R_{z,\eps}$), and moreover the neighbouring rooms are disjoint; this can be achieved due to~\eqref{d:countable}--\eqref{gamma:countable}.  As before, 
\begin{gather}\label{Ae}
  \A\e=-\Delta_{\Omega\e}+V\e, 
\end{gather}
where $\Delta_{\Omega\e}$ is the Neumann Laplacian in $\Omega\e$, and the potential $V\e$ is defined as in~\eqref{V}.

As in the case of a single $\delta$-interaction one has the estimate
\begin{gather}
  \label{th2:count}
  \wt d_\Hausdorff\left(\sigma(\A\e),\sigma(\widehat\A_Z)\cup\{0\}\right)
  \leq C\eps^{\min \{\al, 1/2-\be, 2\be\}},
\end{gather}
where $C>0$ is a constant independent of $\eps$.
The proof of~\eqref{th2:count} is similar to the proof of Theorem~\ref{th2}.  It relies on the abstract Theorems~\ref{thA:1} and~\ref{thA:2} being applied to 
\begin{equation*}
  \HS\e\coloneqq \L(\Omega\e),\quad 
  \A\e \text{ as in~\eqref{Ae},}\quad 
  \HS_0\coloneqq \L(\Omega_0)\oplus \ell^2(Z),
  \quad
  \A_0\coloneqq \widehat \A_Z\oplus 0_{\ell^2(Z)}
\end{equation*}
and appropriately modified operators $\J\e$, $\wt\J\e$, $\J\e^1$, $\wt\J\e^1$.

One of the byproducts of this result is the tool for constructing 
periodic Neumann waveguides with spectral gaps.
Namely, it is known~\cite[Sec.~III.2.3]{AGHH05} that the spectrum of the Kronig-Penney operator~\eqref{KP}  has  infinitely many gaps provided $\ga\not= 0$.
Then, using the  estimate~\eqref{th2:count}, we conclude that for any $m\in\N$  the Neumann Laplacian on the periodic domain $\Omega\e$~\eqref{Omegae} (cf.~\eqref{KP+}) has at least $m$ gaps provided $\eps$ sufficiently small enough. 
 
As another byproduct of our careful calculations of the constants, we
may also allow that $Z=Z_\eps$ depends on $\eps$ in such a way that  $\inf_{z,z' \in Z_\eps, z \ne z'} \abs{z-z'}=2\ell_\eps>0$ is still positive as in~\eqref{d:countable}, but with $\ell_\eps \to 0$.  
Moreover, we may allow that $\gamma_z=\gamma_{z,\eps}$ depends on $\eps$ such that $\gamma_{z,\eps} \to  \infty$ as $\eps \to 0$.  If, for example, $\ell_\eps =\eps^\tau$ and $\gamma_{z,\eps}=\gamma_{z,1}\eps^{-\omega}$, then we can show a spectral estimate as in~\eqref{th2:count} remains valid, namely we have $\wt d_\Hausdorff(\sigma(\A\e),\sigma(\widehat\A_{Z_\eps})\cup\{0\})
 \to 0$ as $\eps \to 0$ provided $\tau>0$ and $\omega>0$ are sufficiently small.  Such results are useful when approximating other point interactions by $\delta$-interactions of the form $\widehat \A_{Z_\eps}$, as e.g.\ done for certain self-adjoint vertex conditions on a metric graph, see~\cite{EP13} and the references therein.

\section*{Acknowledgements}

The work of the first author is partly supported by the Czech Science Foundation (GA\v{C}R) through the project 21-07129S. This research was started when the first author was a postdoctoral researcher in Graz University of Technology; he gratefully acknowledges financial support of the Austrian Science Fund (FWF) through the project M~2310-N32.

\end{document}